\documentclass[a4paper]{article}
\usepackage{amssymb}
\usepackage[margin=1.3in]{geometry}
\usepackage[affil-it]{authblk}
\usepackage{amsthm}

\newtheorem{theorem}{Theorem}

\newenvironment{keywords}{\small \textbf{Keywords:}}{}

\usepackage{amsmath}
\usepackage[authoryear,longnamesfirst]{natbib}
\usepackage[colorinlistoftodos]{todonotes}

\newtheorem{remark}{Remark}
\newtheorem{definition}{Definition}
\usepackage{algorithm}
\usepackage{algpseudocode}
\newtheorem{proposition}{Proposition}
\usepackage{caption}
\usepackage[table]{xcolor}
\usepackage{colortbl}
\usepackage{float}
\usepackage{placeins}
\usepackage{flafter} 
\usepackage{booktabs}
\usepackage{multirow}
\usepackage{threeparttable}
\usepackage{enumitem}
\usepackage{url}
\usepackage{orcidlink}

\newcommand{\DSP}{$\textbf{DSP}(\hat{x})$\,}

\newcommand{\DSPA}{$\textbf{DSP}_A(\hat{x})$\,}
\makeatletter
\def\blfootnote{\xdef\@thefnmark{}\@footnotetext}
\makeatother

\begin{document}

\title{A Benders decomposition approach for the $k$-defensive domination problem\blfootnote{
		\textit{Email addresses:} 
        \texttt{bilge.varol@tau.edu.tr} (Bilge Varol),
		\texttt{tinaz.ekim@bogazici.edu.tr} (Tınaz Ekim), 
		\texttt{ktaninmis@ku.edu.tr} (Kübra Tanınmış)}
}

\author[1,2]{Bilge Varol \,\orcidlink{0000-0002-3252-4277}}
\author[1]{Tınaz Ekim \,\orcidlink{0000-0002-1171-9294}}
\author[3]{K\"ubra Tan{\i}nm{\i}\c{s} \,\orcidlink{0000-0003-1081-4182}}
		
\date{}

\affil[1]{ \small Department of Industrial Engineering, Boğaziçi University, Istanbul, Türkiye}

\affil[2]{\small Department of Industrial Engineering, Turkish-German University, Istanbul, Türkiye}

\affil[3]{\small Department of Industrial Engineering, Koç University, Istanbul, Türkiye}

\maketitle

\begin{abstract}
The $k$-defensive domination problem is a powerful modeling tool for strategic decision-making in network security and disaster/emergency management, where multiple nodes may be simultaneously under attack. Despite its practical relevance, the problem has been poorly studied, largely due to its high computational difficulty. This study investigates the application of Benders decomposition to the $k$-defensive domination problem, aiming to improve computational efficiency over standard integer programming formulations.  
Several cut generation strategies including a combinatorial approach and addition of multiple cuts simultaneously are proposed. Theoretical results on the strength of feasibility cuts are presented.
In addition, two novel enhancement strategies are proposed: a clique-cover-based heuristic, the first feasible solution method in the literature for this problem, achieving up to 98\% improvement over the trivial upper bound, and an initial cut generation heuristic that in some cases resolves the problem without further branching. 
Experiments on Erd\H{o}s--R\'enyi, chordal, and Barab\'asi--Albert instances show that the algorithm variant involving all of the proposed components is able to solve instances that remain out of reach for classical formulations and standard Benders decomposition.

\end{abstract}

\begin{keywords}
Networks; Emergency Response; Defensive Domination\; Integer Programming; Benders Decomposition 
\end{keywords}

\section{Introduction}\label{1}

Domination models are frequently used in security problems and disaster/emergency management applications
in networks. Assuming that a node where rescue/defense equipment is located has the ability to protect itself or one of its 
neighbors in the network, a dominating set represents the locations to place rescue/defense equipments so that we can meet
the need for emergency response that may arise at any node of the network. Since rescue/defense equipment is a costly resource, we seek a dominating set containing the least number of nodes. Due to its extensive applications and its theoretical properties, the dominating set problem is widely studied in the literature (\cite{haynes2013fundamentals, henning2014total, haynes2020topics, haynes2023domination}). Several variations of the classical dominating set problem with high practical and theoretical relevance have also attracted a lot of attention (see e.g. \cite{haynes2013fundamentals, harary1995conditional}).

In this paper, we consider a relatively new and little-studied version of domination problems, called the
defensive domination or $k$-defensive domination problem ($k$-DefDom). When making strategic decisions in emergency response problems in networks,
this variant takes into account situations where security threats or need for aid may occur at many places at the
same time, and offers a more advanced modeling power that classical domination problems cannot provide. The
need for security/assistance can occur simultaneously at any $k > 1$ nodes that we do not know in advance, and
we need to be prepared to respond all possible threats. A security (or assistance) team is only capable
of assisting one node. Therefore, a distinct team must be assigned to each of the nodes that are threatened at
the same time.

Introduced by \citep{farley2004defensive}, there are only a few studies on the defensive domination problem all of which focusing on its computational complexity in special graph classes. 

\cite{ekim2020complexity} showed that the $k$-DefDom problem is $\mathcal{NP}$-complete even in split graphs when $k$ is fixed. Additionally, they demonstrated that determining whether a given set of vertices forms a $k$-defensive dominating set is already co-$\mathcal{NP}$-hard, highlighting the problem’s inherent computational difficulty. They also showed that the problem can be solved in linear time in paths, cycles, co-chain graphs, and threshold graphs, even if $k$ is not fixed. \cite{ekim2023defensive} considered proper interval graphs and presented an $O(n + |B| \log k)$ time algorithm where $|B|$ is the number of bubbles in a bubble representation of a proper interval graph. 

\cite{dereniowski2019cops} introduced a variation of the $k$-DefDom problem, referred to as $A$-defensive domination problem. This problem takes as input a set of specific attack scenarios and supports assigning several defenders to a node. While the problem is shown to be $\mathcal{NP}$-complete even for split graphs, it is proved that $A$-defensive domination becomes solvable in polynomial time when restricted to interval graphs, due to the strong connection the authors establish between the defensive domination and the Cops and Robbers game with an infinitely fast robber.

More recently, \cite{henning2025more,henning2021approximation} proved that the $k$-DefDom problem is $\mathcal{NP}$-complete for bipartite graphs with fixed $k>0$, whereas it is efficiently solvable for complete bipartite graphs. They also investigated approximability properties of the defensive domination problem and showed that it is APX-complete for bounded-degree graphs.

Lastly,  \cite{chaplick2025note} settled the complexity of the defensive domination problem by proving that it is $\Sigma_2^P$-complete, placing it in the second level of the polynomial hierarchy. This result provides a formal explanation for the inherent difficulty of the problem and suggests that defensive domination is unlikely to admit polynomial-time algorithms or compact exact formulations in the general case. The authors further showed that this hardness result extends to a natural multi-set variant of defensive domination. From an algorithmic perspective, they demonstrated that the multi-set version becomes solvable in polynomial time when restricted to interval graphs. However, they noted that the complexity status of the original defensive domination problem on general interval graphs remains open.

To date, there has been no exact solution approach developed for the $k$-DefDom problem in general graphs. Motivated by this gap, in this work, we develop a Benders Decomposition based exact method to solve the problem in general graphs. Our contribution and the outline of the paper can be summarized as follows.

A novel integer programming (IP) formulation is introduced for the $k$-DefDom problem in Section \ref{section: ProblemDefinition}. In Section \ref{section:Benders}, we introduce a Benders decomposition framework to address the challenge posed by a large number of possible attacks, by exploiting the special structure of the problem, which simplifies to solving multiple bipartite matching problems once the defender choices are fixed.
Furthermore, a combinatorial separation method that avoids solving the dual subproblems using Linear Programming (LP) is presented. Instead of standard LP-based separation, a recursive algorithm is employed. This algorithm searches for attack scenarios that are not countered by the current defenders by examining the graph structure directly. Consequently, violations are identified and feasibility cuts are generated without the computational cost of solving many LPs. 

To further improve the performance of the proposed decomposition algorithm, we introduce two heuristic approaches in Section \ref{section:heuristics}. The first, a maximal independent set-based heuristic, focuses on generating strong initial feasibility cuts. The second, a clique-cover-based heuristic, generates high-quality feasible solutions and thus tight initial upper bounds, which support an efficient pruning of the search space. In addition to enhancing the performance of our exact methods, this latter heuristic is, to the best of our knowledge, the first to guarantee a feasible defensive dominating set for any input graph, and therefore constitutes a contribution in its own right. In Section \ref{section:results}, we evaluate the performance of all methods proposed in this paper under different implementation choices, including several cut sampling strategies, using a comprehensive experimental test-bed based on Erdös-Rényi graphs, Barabási-Albert graphs, and chordal graphs. The graph instances used in the experiments, are available at \cite{github} \url{(https://github.com/bilgeevrl/k-DefDom-Graph-Instances)}. We discuss the limitations and future research directions in Section \ref{section:conclusion}.
 
\section{Problem definition and integer programming formulation}\label{section: ProblemDefinition}

Let $G=(V,E)$ be a graph with the set of vertices $V$ and the set of undirected edges $E$, where each edge $(u,v)\in E$ connects a pair of distinct vertices $u$ and $v$. 
The classical domination problem seeks to find a subset of vertices $D \subseteq V$, which represent defender vertices of $G$ and the goal is to ensure that every vertex in $V$ is either in $D$ or is adjacent to a vertex in $D$. The $k$-DefDom problem generalizes this idea by incorporating multiple and simultaneous attack scenarios, which are called \textit{$k$-attacks}. In this section, we first provide a formal definition of the problem and then, we introduce a mixed-integer programming (MIP) formulation based on the explicit assignment of defenders to attacked vertices for each attack scenario. 

Formally, a $k$-attack on $G$ is a set of $k$ distinct vertices $\{a_1,...,a_k\}$ that are under attack. A $k$-attack $A$ can be \textit{countered} (or \textit{defended}) by a subset \(X \subseteq V\) of vertices, called \textit{defenders}, if and only if there exists an injective function $f: A \to X$, such that for each $i\in A$ either $f(a_i) = a_i$ or $(a_i, f(a_i)) \in E$. This implies that there exists an assignment of $k$ distinct members of $X$ to the members of $A$, represented as ${(a_1,x_1),...,(a_k,x_k)}$, such that either $a_i =x_i$ or $(a_i,x_i)$ is an edge of $G$, for all $i, 1 \leq i \leq k$ \citep{ekim2020complexity}. 
A subset $D\subseteq V$ is a \emph{$k$-defensive dominating set} of $G$ if and only if $D$ can counter any $k$-attack in $G$. The definition of the \emph{$k$-DefDom problem} is given below.

\begin{definition}[\textbf{$k$-DefDom Problem}]
Given $G=(V,E)$, find $D\subseteq V$ of minimum cardinality such that $D$ is a $k$-defensive dominating set of $G$.
\end{definition}

Theorem \ref{thm:coNPhard} emphasizes that the $k$-DefDom problem is significantly more difficult than other domination problems by showing that it is not even in NP. This result points out in particular the difficulty of obtaining a feasible defensive dominating set by means of an efficient heuristic algorithm. 

\begin{theorem}[\cite{ekim2020complexity}]
\label{thm:coNPhard}
Given a graph $G$, an integer $k$, and a set $D \subseteq V(G)$, it is co-NP-hard to decide whether $D$ is a $k$-defensive dominating set.
\end{theorem}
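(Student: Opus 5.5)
We will show that the complement problem (deciding whether $D$ is \emph{not} a $k$-defensive dominating set) is NP-hard. This is done by a polynomial reduction from a classical NP-complete problem. This complement problem is clearly in NP: a certificate is a $k$-attack $A$ that cannot be countered. Checking a candidate $A$ amounts to testing whether the bipartite graph between $A$ and $D$ (with $a\in A$ adjacent to $x\in D$ iff $x=a$ or $ax\in E$) has a matching saturating $A$, which can be done in polynomial time. By Hall's theorem, $A$ is not countered iff some $S\subseteq A$ satisfies $|N_D[S]|<|S|$, where $N_D[S]$ denotes the defenders in the closed neighbourhood of $S$. Hence $D$ fails to be $k$-defensive dominating iff there is a vertex set $S$ with $|S|\le k$ and $|N[S]\cap D|<|S|$; any such $S$ extends to a $k$-attack since supersets cannot repair the Hall violation. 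The problem thus becomes: find a small set of vertices whose closed neighbourhood contains too few defenders.

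I would reduce from \textsc{Clique}, or equivalently from the balanced "dense subgraph" type problem, with instance $(H,t)$. Take the bipartite incidence structure of $H$. Create one attackable vertex $e$ for every edge of $H$, and one defender $v$ for every vertex of $H$, and put $D=V(H)$. Join $e$ to the defenders at its two endpoints. The edge-vertices form an independent set and lie outside $D$. A set $S$ of edge-vertices then has $N[S]\cap D$ equal to the set of endpoints it spans.

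Set $k=\binom{t}{2}$. A Hall violator of size at most $k$ is then an edge set spanning fewer vertices than its number of edges. This criterion is wrong for clique detection, since small graphs also satisfy it; for example, a triangle has 3 edges and 3 vertices. So I would calibrate by padding. Give each vertex-defender $v$ extra private defender capacity: add $c$ pendant defender vertices attached to each edge-vertex, or alternatively make each edge-vertex require several attacked copies. The goal is that a set $S$ of $m$ edges is a violator iff $m > \alpha\cdot|V(S)|+\beta$, a threshold achieved only by $t$-cliques within the size bound $k$.

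The main obstacle is exactly this calibration: ensuring that the only small violators correspond to cliques, and that defender vertices themselves, which may also be attacked, do not create spurious violators. To handle the latter I would make every original-vertex defender also adjacent to many private pendant defenders, so that any $S$ containing defender vertices is never a violator. Since the paper takes the result from \cite{ekim2020complexity}, I would follow their reduction for the precise gadget. Once the equivalence "$H$ has a $t$-clique iff $D$ is not $k$-defensive dominating" is established, co-NP-hardness of the stated decision problem follows.
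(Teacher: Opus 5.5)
The paper gives no proof of this statement; it quotes it from \cite{ekim2020complexity}. Your proposal therefore has to stand on its own, and it has a genuine gap: the polynomial reduction, which is the whole content of a co-NP-hardness proof, is never given.

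Your preliminaries are correct. The complement is in NP via a matching test. By Hall's theorem, $D$ fails iff some $S$ with $|S|\le k$ has $|N[S]\cap D|<|S|$, and such an $S$ extends to a $k$-attack. But the only construction you write down is one you discard yourself. The repairs are left as alternatives with unspecified parameters, and you close by deferring ``the precise gadget'' to the cited paper. Several specifics are also off:
\begin{itemize}
\item The bare incidence construction fails worse than you say. The attack $\{e,u,v\}$ with $e=uv$ sees only the two defenders $u,v$, so $D$ is non-defensive for every $H$ with an edge once $k\ge 3$.
\item On edge-vertices alone, the test is ``more edges than spanned vertices''. This is passed by $K_4$ minus an edge (five edges on four vertices), whereas your triangle is not a violator, since $3=3$.
\item Attaching pendant defenders to edge-vertices moves in the wrong direction. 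A set of $m$ edge-vertices then sees at least $cm\ge m$ defenders, so nothing is ever violated.
\item Several attacked copies per edge is the right direction, but not on its own. With one defender per vertex and $r\ge 3$ copies, a single edge is already a violator. The attacker can also spend its budget on partial copies of more than $\binom{t}{2}$ edges, which your criterion $m>\alpha|V(S)|+\beta$ ignores.
\end{itemize}

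Your plan can be completed, but only with a two-sided calibration plus a check that partial and defender attacks do not help. Here is one construction that works, for $t\ge 4$:
\begin{itemize}
\item Replace each vertex $u$ of $H$ by $s=(t-2)(t+1)$ defenders, each carrying $k$ private pendant defenders.
\item Replace each edge $uv$ by $r=2t$ non-defenders, each adjacent to all copies of $u$ and $v$.
\item Set $k=r\binom{t}{2}=t^2(t-1)$.
\end{itemize}
If $H$ has a $t$-clique, all copies of its edges form a $k$-attack facing only $st=t^3-t^2-2t<k$ defenders.

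Conversely, let $S$ be a violator with $|S|\le k$.
\begin{itemize}
\item $S$ contains no vertex copy, since the closed neighbourhood of one such copy already holds $k+1$ defenders.
\item The pendants in $S$ can then be deleted without decreasing $|S|-|N[S]\cap D|$.
\item If the remaining edge copies come from an edge set $F$ spanning $n$ vertices, violation forces $\min(k,r|F|)>sn$.
\end{itemize}
Now $s/r=\tfrac{t-1}{2}-\tfrac{1}{t}$ and $\tfrac{s}{r}(t+1)\ge\binom{t}{2}$ for $t\ge 4$. So this inequality is possible only when $n=t$ and $|F|=\binom{t}{2}$, i.e.\ when $F$ is a $t$-clique. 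Your proposal leaves open exactly this narrow window for $s/r$, the matching choice of $k$, and the handling of partial attacks.
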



Next, we develop an integer programming formulation for the $k$-DefDom problem. Let $\mathcal{A}$ represent the set of all possible $k$-attack combinations in $G$, where each attack scenario $A \in \mathcal{A}$ corresponds to a subset of $k$ targeted vertices $A \subseteq V$. The closed neighborhood of a vertex $v\in V$, denoted as $N[v]$, is the set of all vertices adjacent to $v$ including itself. In other words, $N[v]=N(v)\cup \{v\}$ where $N(v)$ is the open neighborhood of $v$. The closed neighborhood of any set $S\subseteq V$ is defined as $N[S]=\cup_{v\in S} N[v]$, i.e., the set of vertices that are in $S$, or adjacent to at least one vertex in $S$.
The decision variables and the integer programming formulation of the problem are presented below. 
\[
x_i =
\begin{cases} 
1 & \text{if } i \text{ is in the defensive dominating set} \\
0 & \text{otherwise}.
\end{cases}
\]
\[
y_{ij}^{A} =
\begin{cases} 
1 & \text{if } i\in V \text{ defends } j \in A \text{ in attack scenario $A\in \mathcal{A}$} \\
0 & \text{otherwise}.
\end{cases}
\]

\begin{align}
\textbf{(kDD):}\quad &\text{Minimize} \quad \sum_{i \in V} x_i \label{model:3.1} \\
\text{subject to: }&\sum_{i \in N[j]} y_{ij}^{A} = 1 \quad \forall j \in A,  A \in \mathcal{A} \label{model:3.2} \\
&\sum_{j \in A \cap N[i]} y_{ij}^{A} \leq x_i \quad \forall i \in N[A], \, A \in \mathcal{A} \label{model:3.3} \\
&x_i \in \{0, 1\} \quad \forall i \in V \label{model:3.4} \\
&y_{ij}^{A} \in \{0, 1\} \quad \forall i \in V, j \in A, A \in \mathcal{A} \label{model:3.5} 
\end{align}

The aim of this problem is to find the smallest possible $k$-defensive dominating set in graph $G = (V, E)$. The objective function \eqref{model:3.1} minimizes the size of the defensive dominating set, i.e., the number of defender vertices. Constraint \eqref{model:3.2} ensures that each attacked vertex  $j$  in the attack set  $A$  is defended by exactly one vertex from its closed neighborhood  $N[j]$. Note that the index $A$ in variables $y_{ij}^{A}$ is needed to allow a defender $i$ to defend different vertices under different attack scenarios. Constraint \eqref{model:3.3} links the defending variable $y_{ij}^A$ to the membership variable  $x_i$, ensuring that a vertex $i$ can defend others only if it is part of the dominating set, and $i$ can defend at most one vertex in each attack scenario. Finally, constraints \eqref{model:3.4} and \eqref{model:3.5} enforce the binary nature of the decision variables. This formulation ensures a defensive strategy that minimizes the size of the dominating set while guaranteeing effective defense against all possible $k$-attacks.

In the model (kDD), it is observed that the decision variable $y_{ij}^{A}$ can be relaxed as continuous without affecting the solution's integrality. When $x_i$ values are fixed, the problem reduces to determining $y_{ij}^{A}$, which represents attack scenario-based defender-attacker assignments. This allows the problem to be transformed into a maximum matching problem in bipartite graphs, where one set of vertices corresponds to the defenders and the other set to the attacked vertices. For each attack scenario $A_l \in \mathcal{A}$, where $l = 1, \dots, |\mathcal{A}|$, a bipartite graph $ G_l = (D \cup A_l, E_l) $ is constructed, with edges $ \{i,j\} $ such that $i \in D $ and $j \in (N[i] \cap A_l)$. It is well known that the linear programming (LP) relaxation of the maximum matching problem in bipartite graphs always yields an integer solution. Given that the subproblem formulation with fixed $x_i$ values can be interpreted as a maximum matching problem across multiple (as many as $(|\mathcal{A}|$) bipartite graphs, the solution space remains integer-valued whenever the right-hand side coefficients are integers. As a result, in the optimal solution, the variables $y_{ij}^{A}$ naturally take binary (0–1) values, even when only non-negativity and upper bound constraints are imposed. This means that solving the relaxed version of the model still produces integer solutions due to the integrality of extreme points in the polyhedron defined by the feasible region.

\section{Benders decomposition approach}\label{section:Benders}

In this section, we present a Benders decomposition reformulation of (kDD) by exploiting its special structure. Following the definitions of the master problem and the subproblems, we present results on the structure of the subproblems’ feasible region and propose a combinatorial approach for generating the Benders cuts based on this analysis, which can substantially reduce the computational effort required by the method.

\subsection{A Benders decomposition model}
\label{section:BasicBenders}

Benders decomposition \citep{benders1962partitioning} is a widely used decomposition method to solve large-scale mixed-integer programming (MIP) and other complex optimization problems efficiently, by leveraging a particular block structure. This structure allows the problem to be split into smaller, substantially easier subproblems. In Bender decomposition, the variables are split into two sets such that the master problem is solved over one set, and the subproblem which is parametrized by the master problem's solution is solved over the remaining variables. While solving these problems iteratively, the information obtained from the subproblem is translated into feasibility or optimality cuts and fed into the master problem. For more detailed information on the Benders decomposition and its implementation, see \cite{taskin2010benders}.

One class of optimization problems for which the Benders decomposition method is well suited is two-stage stochastic programs, due to the inherent partition of its decision variables into first-stage and second-stage variables. The formulation (kDD) we develop for the $k$-DefDom possesses a similar structure, where the selection of defenders can be seen as first-stage variables, and the assignment of the defenders to the nodes under attack can be seen as the second-stage variables. This makes Benders decomposition a promising candidate as a solution approach. In the following, we present how to apply the decomposition and further exploit the problem structure to design an efficient exact algorithm for the $k$-defensive domination problem.

Using the maximum-matching results presented in Section \ref{section: ProblemDefinition}, in this section we adopt the relaxed (kDD) model with continuous $y_{ij}^{A}$ variables. In the logic of Benders decomposition approach, the master problem (MP) contains the complicating variables, while the subproblem (SP) contains the remaining variables. In this problem, complicating variables are $x_i$ because of their integral nature. Then, the master problem will only have $x_i$ variables with feasibility constraints. The optimal solution of the MP, which is denoted by $\hat{x_i}$, will be fed into the subproblem as a parameter. Then, the subproblem will be solved with fixed $x_i$ values to obtain $\hat{y}_{ij}^{A}$.
\begin{align}
\textbf{MP: }\quad&\text{Minimize} \quad \sum_{i \in V} x_i \label{model:4.6} \\
\text{subject to: }&\text{$x$ corresponds to a feasible defender set} \label{model:4.7}\\
&\quad x_i \in \{0, 1\} \quad \forall i \in V \label{model:4.8} 
\end{align}
\begin{align}
\textbf{SP($\hat{x}$): }\quad&\text{Minimize} \quad 0 \label{model:4.9} \\
\text{subject to: }&\sum_{i \in N[j]} y_{ij}^{A} = 1 \quad \forall j \in A, A \in \mathcal{A} \label{model:4.10} \\
&\sum_{j \in A\cap N[i]} y_{ij}^{A} \leq \hat{x}_i \quad \forall i \in N[A], A \in \mathcal{A} \label{model:4.11} \\
&y_{ij}^{A} \geq 0 \quad \forall i\in V, j \in A, A \in \mathcal{A}\ \label{model:4.12}
\end{align}

The MP is a pure IP model that seeks to find an optimal defender set that minimizes the total number of selected defenders in the objective function \eqref{model:4.6}. The constraint set \eqref{model:4.7} represents the feasible solution space for $x_i$ values which correspond to a valid defender set. It can be checked whether constraint \eqref{model:4.7} is satisfied by solving the SP($\hat{x}$). Constraint \eqref{model:4.8} presents the binary decision variable $x_i$. 

The SP($\hat{x}$) is an LP that determines the feasibility of the given defender set represented by $\hat{x}$ obtained from the MP, by checking whether it can successfully defend against all possible attack scenarios.The objective function \eqref{model:4.9} has a trivial objective of minimizing 0, meaning it only checks feasibility without optimizing any quantity since in the main model the aim is to minimize only the sum of the $x_i$ values. The constraint sets \eqref{model:4.10}, \eqref{model:4.11} are the original constraints that are included in the main model. Finally, constraint set \eqref{model:4.12} presents the $y_{ij}^{A}$ variables, which are continuous. 

If $\hat{x}$ represents a feasible defender set, then SP($\hat{x}$) is feasible, otherwise it is infeasible. For a given $\hat{x}$, if SP($\hat{x}$) has a feasible solution $\hat{y}$, then $(\hat{x}, \hat{y})$ corresponds to a feasible solution of the original problem. However, if SP($\hat{x}$) does not yield a feasible solution, it means that $\hat{x}$ does not correspond to a $k$-defensive dominating set and the MP must be adjusted so that $\hat{x}$ is not considered in future iterations. This is done by adding feasibility cuts by using the theory of LP duality. The dual formulation of SP($\hat{x}$) is presented in equations \eqref{model:4.13}-\eqref{model:4.16}.
\begin{align}
\textbf{DSP($\hat{x}$): }&\text{Maximize} \quad \sum_{A \in \mathcal{A}} \sum_{j \in A} \alpha^A_j + \sum_{A \in \mathcal{A}} \sum_{i \in N[A]} \beta^A_i \hat{x}_i \label{model:4.13} \\
\text{subject to: }&\quad \alpha^A_j + \beta^A_i \leq 0, \quad \forall i \in N[j], j \in A,  A \in \mathcal{A} \label{model:4.14}\\
&\quad \alpha^A_j \quad \text{urs.} \quad \forall j \in A, A \in \mathcal{A} \label{model:4.15}\\
&\quad \beta^A_i \leq 0, \quad \forall i \in N[A], A \in \mathcal{A} \label{model:4.16}
\end{align}

If SP($\hat{x}$) is infeasible, then DSP($\hat{x}$) is unbounded, as it is always feasible (the origin is a feasible solution). In this case, there exist an extreme ray $(\hat{\alpha},\hat{\beta})$ of DSP($\hat{x}$) such that $\sum_{A \in \mathcal{A}} \sum_{j \in A} \hat{\alpha}^A_j + \sum_{A \in \mathcal{A}} \sum_{i \in N[A]} \hat{\beta}^A_i x_i > 0$. We add the following \textit{feasibility cut} to de MP and solve it again.
\begin{equation}
    \sum_{A \in \mathcal{A}} \sum_{j \in A} \hat{\alpha}^A_j + \sum_{A \in \mathcal{A}} \sum_{i \in N[A]} \hat{\beta}^A_i x_i \leq 0
\end{equation}
By iteratively solving the MP and the SP, Benders decomposition refines the defender selection by adding feasibility cuts when the subproblem is infeasible, and stops when there are no violated feasibility cuts. In this setting, optimality cuts are not needed since the subproblem SP($\hat{x}$) is only a feasibility problem for a given MP solution $\hat{x}$.

We note that, the subproblem solves the model for all attack scenarios, but this is not practical since the main complexity comes from the number of attack scenarios. Since in the subproblem each scenario $A \in \mathcal{A}$ is independent of the others, we can exploit this structure to decompose it into multiple subproblems, solving one problem per attack scenario where we examine the existence of a matching of size $k$ between the current attack and the defenders presented by the vector $\hat{x}$. 
%
%
The resulting disaggregated dual subproblem is formulated as follows:
\begin{align}
\textbf{DSP}_A(\hat{x}): &\text{Maximize} \quad  \sum_{j \in A} \alpha_j +  \sum_{i \in N[A]} \beta_i \hat{x}_i \label{model:4.21} \\
\text{subject to: }&\quad \alpha_j + \beta_i \leq 0, \quad \forall j \in A, i \in N[j]\label{model:4.22}\\
&\quad \alpha_j \quad \text{urs.} \quad \forall j \in A \label{model:4.23}\\
&\quad \beta_i \leq 0, \quad \forall i \in N[A] \label{model:4.24}
\end{align}

Decomposing the subproblem into independent smaller subproblems allows us to apply different solution techniques. Generating one feasibility cut per scenario instead of a single aggregated cut might result in stronger cuts and speed up the convergence. A potential drawback is that the number of cuts can grow very large, which may lead to longer MP solution times. Identifying the most effective cut generation strategy such as cut sampling will therefore be a central focus of our experimental study, in which we will also examine the practical implications of the findings presented in the next section.

\subsection{A combinatorial approach for cut separation}
\label{section:combinatorialCuts}
In this section, we present a method to obtain Benders cut using a combinatorial procedure, instead of solving dual subproblem \DSPA as an LP. It is easy to observe that the origin $(\alpha, \beta)=(0,0)$ is feasible for any attack $A\in \mathcal{A}$. For an improved solution, some $\alpha_j$ must be greater than zero. Additionally, as the value of a particular $\alpha_j$ increases, the value of each $\beta_i, i\in N[j]$ decreases by the same amount. This decrease is reflected to the objective value only for $i\in N[j]$ such that $\hat x_i=1$, assuming $\hat{x}\in \{0,1\}$. In Theorem \ref{thm:combinatorialCut}, we formally describe an approach to use this observation for obtaining a violated feasibility cut, i.e., an improving unboundedness direction of \DSPA, if any exists.

\begin{theorem}
\label{thm:combinatorialCut}
    Let $\hat{x}\in \{0,1\}^{|V|}$ be a given defender configuration and $A\in \mathcal{A}$ be an attack scenario. Let $S\subseteq A$ be a partial attack such that $|\{i\in N[S]: \hat{x}_i=1\}| < |S|$.
Additionally, let $(\hat \alpha,\hat \beta) \in \mathbb{R}^{|A|+|N[A]|}$ meet the following conditions:
\begin{enumerate}
    \item $\hat \alpha_j=1$ for all $j\in S$, and $\hat \alpha_j=0$ for all $j \in A \setminus S$.
    \item $\hat \beta _i=-1$ for all $i\in N[S]$, and $\hat \beta _i=0$ for all $i \in N[A] \setminus N[S]$.
\end{enumerate}
$(\hat \alpha,\hat \beta)$ is an improving ray of \DSPA. It is also extreme if $N[j] \not \subseteq N[S]$ for all $j\in A \setminus S$.

\end{theorem}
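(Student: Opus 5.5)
The plan has two parts. First, show that $(\hat\alpha,\hat\beta)$ lies in the recession cone of \DSPA\ and has positive objective value. Second, show that it is extreme by a rank argument on the constraints that are tight at it.

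\emph{Ray and improving.} The recession cone of \DSPA\ consists of the homogeneous constraints $\alpha_j+\beta_i\le 0$ for all $j\in A$, $i\in N[j]$, together with $\beta\le 0$.
\begin{itemize}
\item Clearly $\hat\beta\le 0$ and $(\hat\alpha,\hat\beta)\neq 0$ (take $S\neq\emptyset$, as forced by $|S|>0$).
\item For $j\in S$ and $i\in N[j]\subseteq N[S]$ we have $\hat\alpha_j+\hat\beta_i=1-1=0$.
\item For $j\in A\setminus S$ we have $\hat\alpha_j+\hat\beta_i=\hat\beta_i\le 0$.
\end{itemize}
The objective direction evaluates to $\sum_{j\in A}\hat\alpha_j+\sum_{i\in N[A]}\hat\beta_i\hat x_i=|S|-|\{i\in N[S]:\hat x_i=1\}|$. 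This is strictly positive by the Hall-type hypothesis on $S$, so the ray is improving.

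\emph{Extremality.} A nonzero ray of a pointed polyhedral cone in $\mathbb{R}^{d}$, $d=|A|+|N[A]|$, is extreme iff the constraints tight at it have rank $d-1$. Equivalently, the linear system formed by the tight constraints has a one-dimensional solution space. I would list the tight constraints:
\begin{itemize}
\item $\alpha_j+\beta_i=0$ for $j\in S$, $i\in N[j]$;
\item $\beta_i=0$ for $i\in N[A]\setminus N[S]$;
\item $\alpha_j+\beta_i=0$ for $j\in A\setminus S$, $i\in N[j]\setminus N[S]$.
\end{itemize}
Constraints with $j\notin S$, $i\in N[j]\cap N[S]$ are not tight, since their value is $-1$. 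By the hypothesis $N[j]\not\subseteq N[S]$, every $j\in A\setminus S$ has some $i\in N[j]\setminus N[S]$. Combining the second and third groups of tight constraints then forces $\alpha_j=0$ for all $j\in A\setminus S$, and $\beta_i=0$ off $N[S]$.

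What remains are the equalities $\alpha_j=-\beta_i$ for $j\in S$, $i\in N[j]$. These form a bipartite graph $H$ on $S\cup N[S]$ in which every vertex of $N[S]$ is incident to at least one edge. The solutions are constant on each connected component of $H$, so the solution space has dimension equal to the number of components of $H$.

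\emph{Main obstacle.} The main obstacle is exactly this last step: one needs $H$ to be connected. If $S$ splits as $S_1\cup S_2$ with $N[S_1]\cap N[S_2]=\emptyset$, then $(\hat\alpha,\hat\beta)$ is the sum of the two rays built from $S_1$ and $S_2$. Both of these are feasible in the cone, so the ray would not be extreme. I would therefore first check whether connectivity can be derived; it cannot be from the stated hypotheses alone.

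I would then handle this in one of two ways. The first option is to assume $S$ is chosen inclusion-minimal among sets with $|N[S]\cap\{\hat x=1\}|<|S|$, which is the natural output of a Hall-violator search. In that case $H$ is connected: if it split into $S_1,S_2$ with disjoint neighbourhoods, additivity of both $|S_\ell|$ and $|N[S_\ell]\cap\{\hat x=1\}|$ would make one $S_\ell$ a smaller violator. The second option is to state the extremality claim under the explicit condition that $H$ is connected. With connectivity in hand, the solution space is spanned by $(\hat\alpha,\hat\beta)$, the tight constraints have rank $d-1$, and extremality follows.
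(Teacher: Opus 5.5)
Your check that $(\hat\alpha,\hat\beta)$ is a feasible improving direction uses the same case analysis and objective computation as the paper. For extremality you take a different route, and yours is the sound one. The paper intersects the feasible cone of $\mathbf{DSP}_A(\hat x)$ with the box $\|(\alpha,\beta)\|_\infty\le 1$. It shows by a convex-combination argument that $(\hat\alpha,\hat\beta)$ is an extreme point of this bounded set, and then asserts that such points are extreme directions of the cone. You instead compute the rank of the tight constraints and reduce extremality to connectivity of $H$.

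The paper's convex-combination step is correct. The final inference is not, because $\ell_\infty$-normalization does not preserve the correspondence between extreme points and extreme rays. For example, $(1,1)$ is a vertex of $\mathbb{R}^2_+\cap[-1,1]^2$ but not an extreme ray of $\mathbb{R}^2_+$.

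Your objection is therefore right, and here is a concrete instance. Take the path with vertices $u,a,m,b,v$ in this order, $k=3$, $A=\{u,m,v\}$, $S=\{u,v\}$ and $\hat x=0$. Then:
\begin{itemize}
\item $S$ is a Hall violator;
\item $N[m]=\{a,m,b\}\not\subseteq N[S]=\{u,a,b,v\}$;
\item $G^2[A]$ is even connected.
\end{itemize}
Yet $N[u]=\{u,a\}$ and $N[v]=\{b,v\}$ are disjoint. So $(\hat\alpha,\hat\beta)$ is the sum of the two non-proportional feasible directions obtained from conditions 1--2 with $S$ replaced by $\{u\}$ and by $\{v\}$. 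It is not extreme.

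The extremality claim is thus false as written, and the paper's proof does not establish it. Your proposal proves the improving-ray part as stated, and proves extremality under the extra hypothesis that is genuinely needed.

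Your approach gives more than a repair. Two nodes $j,j'\in S$ of $H$ share a neighbour exactly when $d_G(j,j')\le 2$, so $H$ is connected if and only if $G^2[S]$ is connected. Now suppose some $j\in A\setminus S$ has $N[j]\subseteq N[S]$. Then $(\hat\alpha\pm e_j,\hat\beta)$ are both feasible, so $(\hat\alpha,\hat\beta)$ is their midpoint and not extreme. Combining this with your decomposition argument gives an exact characterization: $(\hat\alpha,\hat\beta)$ is an extreme ray if and only if $G^2[S]$ is connected and $N[j]\not\subseteq N[S]$ for all $j\in A\setminus S$.

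Your argument that inclusion-minimal Hall violators are $G^2$-connected is correct. It fits naturally with the paper's preprocessing, which already restricts to $G^2$-connected sets.

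In a write-up, make two small points explicit:
\begin{itemize}
\item The cone is pointed, which the rank criterion requires. If $\pm(\alpha,\beta)$ are both feasible, then $\beta=0$. Applying $\alpha_j+\beta_j\le 0$ to both vectors then gives $\alpha=0$.
\item On each component of $H$ the solutions have the form $\alpha_j=t$, $\beta_i=-t$, rather than being literally constant.
\end{itemize}
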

\begin{proof}
    Let $(\hat \alpha,\hat \beta)$ be a vector satisfying 1 and 2. 
    We need to show that $(\hat \alpha,\hat \beta)$ is a feasible direction, the objective function value increases in the direction of $(\hat \alpha,\hat \beta)$, and it is an extreme direction if the condition $N[j] \not \subseteq N[S]$ for all $j\in A \setminus S$ is satisfied. 
    
    \textbf{Feasibility:} To show that $(\hat \alpha,\hat \beta)$ is a direction (and thus it defines a ray at vertex $(0,0)$), we need to have $\hat{\beta}\leq 0$ and $\hat\alpha_j + \hat \beta_i \leq 0$, for all pairs $j \in A$ and  $i \in N[j]$. The former is clearly satisfied. To show the latter, consider all possible (distinct cases) for $i$ and $j$:

    Case (i): $j\in S$. In this case, $\hat{\alpha}_j=1$ by condition 1. Since $i\in N[j]$ and $N[j]\subseteq N[S]$, $i\in  N[S]$ holds. $\hat \beta_i=-1$ by condition 2. So,
    $\hat \alpha_j + \hat \beta_i =0  \leq 0$.
    
    Case (ii): $j\in A \setminus S$ and $i \in N[j]\cap N[S]$. In this case, $\hat{\alpha}_j=0$ by condition 1 and    $\hat \beta_i=-1$ by condition 2. So, $\hat \alpha_j + \hat{\beta}_i =-1 \leq 0$.

    Case (iii): $j\in A \setminus S$ and $i \in N[j] \setminus N[S]$. In this case, $\hat{\alpha}_j=0$ by condition 1 and    $\hat \beta_i=0$ by condition 2. So, $\hat \alpha_j + \hat{\beta}_i =0 \leq 0$.
    
    \textbf{Improvement:} We need to show that $\sum_{j \in A} \hat \alpha_j +  \sum_{i \in N[A]} \hat \beta_i \hat{x}_i > 0$. 
    By definition of $(\hat \alpha,\hat \beta)$, we have $\sum_{j \in A} \hat \alpha_j=\sum_{j \in S} \hat \alpha_j=|S|$. Similarly, $\sum_{i \in N[A]} \hat \beta_i \hat{x}_i= \sum_{i \in N[S]} \hat \beta_i \hat{x}_i=\sum_{i \in N[S]:\hat x_i=1} \hat \beta_i = -|\{i \in N[S]:\hat x_i=1\}|$. Thus, $\sum_{j \in A} \hat \alpha_j +  \sum_{i \in N[A]} \hat \beta_i \hat{x}_i= |S|-|\{i\in N[S]: \hat{x}_i=1\}|$ which is strictly positive under the current choice of $S$.

\textbf{Extremity:} Lastly, we will show that $(\hat \alpha,\hat \beta)$ constitutes an extreme direction when $N[j] \not \subseteq N[S]$ for all $j\in A \setminus S$. 
     Consider the $L_\infty$ normalization constraint: $\|x\|_{\infty} = \max_{1 \leq i \leq n} |x_i|\leq 1$. \DSPA becomes bounded under the normalization constraint and its extreme points are extreme directions of the original \DSPA. 
     Assume for a contradiction that $(\hat \alpha,\hat \beta)$ is not an extreme point. Then, it could be written as a convex combination of two distinct points. Let $(\hat \alpha^1,\hat \beta^1)$ and $(\hat \alpha^2,\hat \beta^2)$ be such points.
     $(\hat \alpha,\hat \beta)=\lambda (\hat \alpha^1,\hat \beta^1)+(1-\lambda )(\hat \alpha^2,\hat \beta^2)$ for some $\lambda\in(0,1)$. Since, $\hat \alpha_j=\lambda \alpha^1_j+(1-\lambda)\alpha^2_j=1$ for each $j \in S$, and $\alpha\leq 1$ by normalization, $\alpha^1_j=\alpha^2_j=1$. 
     Similarly, $\beta^1_i=\beta^2_i=-1$ for all $i\in N[S]$ and $\beta^1_i=\beta^2_i=0$ for all $i\in N[A] \setminus N[S]$ since $\beta_i\in [-1,0]$ after normalization. 
    For $(\hat \alpha^1,\hat \beta^1)$ and $(\hat \alpha^2,\hat \beta^2)$ to be distinct, there must exist some $j\in A \setminus S$ such that $\alpha^1_j\not=\alpha^2_j$ and $\hat \alpha_j=0=\lambda \alpha^1_j +(1-\lambda)\alpha^2_j$. This implies, w.l.o.g., $\alpha^1_j>0$, which violates the constraint $\alpha^1_j+\beta^1_i\leq 0$ for some $i\in N[j] \setminus N[S]$ (under our initial assumption that $N[j] \setminus N[S]$ is not empty). It follows that $(\hat \alpha,\hat \beta)$ is an extreme point, and thus an extreme ray of the original \DSPA.
\end{proof}

\begin{remark}
    For a given attack $A$, a subset $S\subseteq A$ satisfies the cardinality condition in Theorem \ref{thm:combinatorialCut} if and only if it is a Hall violator, i.e., it violates the condition that for a feasible defender set $D$, $|N[S] \cap D| \geq |S|$ for every subset $S\subseteq V$ with at most $k$ vertices. As a result, every Hall violator $S$ for a given $\hat x$, satisfying that $N[j] \not \subseteq N[S]$ for all $j\in A \setminus S$, gives an improving extreme ray - a violated feasibility cut-.
    When the proposed values of $\hat \alpha$ and $\hat\beta$ are substituted into $\sum_{j \in A} \hat \alpha_j +  \sum_{i \in N[S]} \hat \beta_i \hat{x}_i\leq 0$, the Benders feasibility cut becomes $\sum_{i\in N[A]} x_i\geq |S|$. 
\end{remark}

If there does not exist $S\subseteq A$ such that $|\{i\in N[S]: \hat{x}_i=1\}| < |S|\}$ then, there does not exist a Hall violator for the given defender configuration $\hat x$, so $\hat x$ can defend $A\in  \mathcal{A}$. In this case, the subproblem is feasible, and there are no violated Benders feasibility cuts for $A$. The solution to \DSPA described in Theorem \ref{thm:combinatorialCut} still provides a direction (because the feasible region of \DSPA remains unchanged), but the objective value of \DSPA, which is a function of $\hat{x}$, does not improve in that direction.  

It is easy to observe that, while determining the dual solution in the way proposed in Theorem \ref{thm:combinatorialCut} we could also set $\beta_i=-1$ for $i\in N[A] \setminus N[S]$ given that $\hat{x}_i=0$ (instead of $\beta_i=0$). The solution would remain feasible and the amount of violation at $\hat x$ would not change. However, the cut with $\beta_i=0$ for $i\in N[A] \setminus N[S]$ is tighter, as the constant part of the cut remains the same. 
Among alternative sets inducing a violated Benders feasibility cut (Hall violators), there exist situations in which one such set yields a strictly stronger cut that dominates the cut generated by another set. In the following proposition, we show that, under appropriate conditions, choosing a larger violator set yields a stronger feasibility cut.

\begin{proposition}
\label{prop:strongerCut}
   Let $S\subseteq A$ and $S'\subseteq A$ be Hall violators such that $S\subset S'$. If $|S'\setminus S|\geq |N[S'] \setminus N[S]|$, then the Benders cut that $S'$ defines according to Theorem 1 is at least as strong as the cut that $S$ defines.
\end{proposition}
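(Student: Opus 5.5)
The plan is to show directly that, over the master problem's domain $x\in[0,1]^{|V|}$ (and hence over $\{0,1\}^{|V|}$), every $x$ satisfying the cut generated by $S'$ also satisfies the cut generated by $S$. Feasible-region inclusion is the notion of ``at least as strong'' I would use.

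First I would write both cuts explicitly. I substitute the ray of Theorem \ref{thm:combinatorialCut} into $\sum_{j\in A}\hat\alpha_j+\sum_{i\in N[A]}\hat\beta_i x_i\le 0$. Since $\hat\beta_i=0$ outside $N[S]$, the cut for $S$ is
\[
\sum_{i\in N[S]} x_i \ \ge\ |S|,
\]
and analogously the cut for $S'$ is $\sum_{i\in N[S']} x_i \ge |S'|$. The summation set here is $N[S]$, not $N[A]$: the $\beta$-coefficients vanish on $N[A]\setminus N[S]$.

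Next I would use $S\subset S'$ to get $N[S]\subseteq N[S']$. This lets me split the left-hand side of the $S'$ cut as
\[
\sum_{i\in N[S']}x_i=\sum_{i\in N[S]}x_i+\sum_{i\in N[S']\setminus N[S]}x_i .
\]
Take any $x$ with $0\le x\le 1$ satisfying the $S'$ cut. Bounding the second sum by $|N[S']\setminus N[S]|$ via $x_i\le 1$ gives
\[
\sum_{i\in N[S]}x_i\ \ge\ |S'|-|N[S']\setminus N[S]|\ \ge\ |S'|-|S'\setminus S| = |S|.
\]
The second inequality is exactly the hypothesis $|S'\setminus S|\ge|N[S']\setminus N[S]|$. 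The final equality uses $S\subset S'$, which gives $|S'\setminus S|=|S'|-|S|$. Thus the $S$ cut is implied, so the $S'$ cut is at least as strong.

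There is no real obstacle here. The only points needing care are identifying the correct support $N[S]$ of each cut, and noting that the bound $x_i\le 1$ is available in the master problem, whether through binariness or through the LP relaxation's bounds.
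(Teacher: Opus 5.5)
Your proof is correct and matches the paper's argument step for step. You write both cuts with supports $N[S]$ and $N[S']$, split $\sum_{i\in N[S']}x_i$ over $N[S]$ and $N[S']\setminus N[S]$, bound the second part by $|N[S']\setminus N[S]|$, and apply the hypothesis; your remark that only $x_i\le 1$ is needed, so the implication also holds on $[0,1]^{|V|}$ and not just on binary points, is a harmless slight strengthening of the paper's statement.
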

\begin{proof}
    Let $C$ and $C'$ be the cuts associated with the sets $S$ and $S'$, respectively. 
    $$C: \quad \sum_{i\in N[S]} x_i\geq |S|$$$$C': \quad \sum_{i\in N[S']} x_i\geq |S'|$$

    We will show that any solution $x \in \{0,1\}^{|V|}$ satisfying $C'$ satisfies $C$. 
    Assume that $x$ satisfies the inequality $C'$. Since $S \subset S'$, it follows that $N[S] \subseteq N[S']$. We can partition the summation over $N[S']$ into two disjoint parts:
$$\sum_{i \in N[S']} x_i = \sum_{i \in N[S]} x_i + \sum_{i \in N[S'] \setminus N[S]} x_i \geq |S'|$$
Rearranging the inequality to isolate the sum associated with $C$:
$$\sum_{i \in N[S]} x_i \geq |S'| - \sum_{i \in N[S'] \setminus N[S]} x_i$$
Since $x_i \in \{0,1\}$, the maximum possible value of the summation $\sum_{i \in N[S'] \setminus N[S]} x_i$ corresponds to the cardinality of the set $N[S'] \setminus N[S]$ . Therefore, we have the following lower bound:
$$\sum_{i \in N[S]} x_i \geq |S'| - |N[S'] \setminus N[S]| = |S| + |S' \setminus S| - |N[S'] \setminus N[S]|$$
The equality is obtained by substituting $|S'| = |S| + |S' \setminus S|$ into the first inequality, using $S\subset S'$.
Then, using the proposition's hypothesis that $|S' \setminus S| \geq |N[S'] \setminus N[S]|$, the term $(|S' \setminus S| - |N[S'] \setminus N[S]|)$ is non-negative. It follows that $\sum_{i \in N[S]} x_i \geq |S|$, which is the inequality defining cut $C$. Therefore, $C'$ implies $C$, making $C'$ the stronger cut.
\end{proof}

During the cut-generation procedure described in Section \ref{section:implementation}, we incorporate the cut-strength result established in Proposition \ref{prop:strongerCut} to guide the sampling of violated Benders feasibility cuts associated with a given infeasible defender configuration.

\section{Heuristic methods}
\label{section:heuristics}

In the Benders decomposition framework that we propose, the objective value of the MP yields a lower bound on the true optimal objective value of (kDD), as the MP outputs super-optimal solutions until a feasible solution is found, which is also guaranteed to be optimal under the classical iterative implementation of Benders decomposition. The initial lower bound is zero, because the MP without any constraints would find the origin as the optimal solution. On the other hand, an upper bound is only available when a feasible solution is found. Therefore, if the iterations are terminated earlier, an optimality gap cannot be obtained.
In this section, we introduce problem-specific heuristic methods
to enhance the performance of the Benders decomposition framework for the $k$-DefDom problem, by improving the bounds on the optimal objective value. In the first one, the goal is to generate an effective initial set of Benders feasibility cuts that could be included in the starting MP formulation. In the second heuristic method that we propose, we focus on finding an initial feasible defensive set which will constitute an initial upper bound on the objective value. To the best of our knowledge, this is the first attempt to solve the $k$-DefDom problem on general graphs in a heuristic way.

\subsection{A heuristic for initial cuts}
\label{section:heuristic1}

When the Benders master problem is initialized without any feasibility cuts, the resulting initial dual bounds are typically of low quality, as the model lacks information regarding the feasibility of candidate solutions. For this reason, it is standard practice to first generate a set of possibly high‑quality initial cuts. To this end, we propose a randomized procedure based on independent sets. In this method, we construct $k$-attacks iteratively, beginning from a randomly chosen vertex and removing the closed neighborhood of that vertex from consideration for subsequent iterations. Consequently, each attack forms an independent set, and the collection of such attacks feeds the model with diverse information.

Algorithm~\ref{alg:mis-initial-cuts} formalizes this procedure. The algorithm begins by computing a maximal independent set $I \subseteq V$, which serves as a pool of candidate attackers with the key property that no two vertices in $I$ are adjacent. Each vertex $s \in I$ is then used as a seed to construct attack sets of varying sizes $t \in \{1, \dots, k\}$ by greedily selecting additional non-adjacent vertices, prioritized by minimum closed neighborhood size. For each distinct generated attack $A$, a Hall-type constraint $\sum_{i \in N[A]} x_i \geq |A|$ is added to the master problem. This systematic procedure generates at most $k \times |I|$ cut constraints, covering the full spectrum of attack sizes. Since vertices in an independent set have disjoint or minimally overlapping neighborhoods, these constraints are typically tight and informative. By adding this diverse set of cuts before optimization begins, the heuristic strengthens the initial LP relaxation and provides a significantly better starting lower bound, potentially reducing the number of Benders iterations required to reach optimality.

\begin{algorithm}[ht]
\caption{MIS-Based Initial Cut Generation}
\label{alg:mis-initial-cuts}
\begin{algorithmic}[1]
\Require Graph $G = (V, E)$, parameter $k$, decision variables $\{x_v\}_{v \in V}$
\Ensure An initial set of Benders feasibility cuts
\State Compute a maximal independent set $I \subseteq V$ using greedy low-degree-first selection
\For{each size $t \in \{1, \dots, k\}$}
    \For{each vertex $s \in I$}
        \State Construct attack set $A$ of size $t$ starting from $s$
        \State \quad (by greedily adding non-adjacent vertices with smallest $|N[\cdot]|$)
        \If{$|A| = t$ and $A$ has not been used previously to add a cut}
            \State Compute closed neighborhood $N[A] = \bigcup_{v \in A} N[v]$
            \State Add cut: $\displaystyle\sum_{i \in N[A]} x_i \geq |A|$
        \EndIf
    \EndFor
\EndFor
\end{algorithmic}
\end{algorithm}

\subsection{A clique-cover-based heuristic}
\label{section:warmstart}

Theorem \ref{thm:coNPhard} highlights the inherent computational difficulty of finding a solution to the $k$-DefDom problem, since any candidate solution must be validated as a feasible defensive set. In particular, for large values of $k$, this verification step becomes computationally prohibitive, as it requires checking whether every possible $k$-attack can be successfully countered. For this reason, the development of heuristic procedures that guarantee the feasibility of the obtained solutions represents a highly valuable research direction. In this section, we make use of clique covers of a graph to obtain a heuristic defensive set, ensuring that any $k$-attack can be countered.

Suppose that we are given a clique cover $\mathcal{C}$ of graph $G$. For each clique $C \in \mathcal{C}$, choosing $k$ arbitrary vertices of $C$ as defenders ensures that all attacks of size $k$ or smaller within $C$ can be defended, as every vertex in $C$ is adjacent to the defender vertices. Accordingly, if $k$
arbitrary vertices from every clique in $\mathcal{C}$ is chosen and added to the defender set $D$, then all possible $k$-attacks can be defended. If the size of a clique is less than $k$, then all vertices of that clique should be added to $D$ and the feasibility of $D$ is still guaranteed. In this approach, the size of the resulting set $D$ depends strongly on the size of the clique cover, thus the importance of finding a clique cover with as few cliques as possible. Besides, it is known that determining a minimum clique cover is NP-hard in general \citep{GJ}. To find a clique cover in the input graph, we solve the graph coloring problem in the complement graph and use the fact that each color class in the complement corresponds to a clique in the original graph. To this end, we adopt the Degree Saturation (DSATUR) heuristic \citep{dsatur}. As for the chordal graph instances (see Section \ref{sec:tests}), we find an optimal clique cover in linear time using the Perfect Elimination Ordering (PEO) \citep{chordalCC}. 

In the baseline version of this primal heuristic approach, we propose to construct the defender set by selecting exactly $\min(k,|C|)$ vertices for each clique $C$, and adding them to $D$. Although it ensures a feasible defensive set, this approach often yields a set of unnecessarily high cardinality. To reduce the size of $D$ while maintaining feasibility, we extend this method by a \textit{matching-based greedy reduction} mechanism, which exploits previously selected defenders to reduce redundant selections across cliques.
%
 Let $\mathcal{C}=\{C_1,C_2,\ldots, C_m\}$ be a clique cover of the graph. We initialize the defensive set $D$ by selecting $k$ vertices of maximum degree from $C_1$. We then iterate through the remaining cliques. For clique $C_i$, $i\geq 2$ we find a maximum matching between $D$ and $C_i$ which represents the assignment of existing defenders in $D$ to a subset of $C_i$. If the size of current $D$ is as large as $k$, any $k$-attack at the vertices of $C_1$ to $C_{i-1}$ in addition to the vertices of $C_i$ in the matching can be countered by $D$. Let $U_i$ denote the set of vertices in $C_i$ not saturated by the matching. To guarantee that any $k$-attack at the vertices of $C_i$ can be countered, we select $\min(k, |U_i|)$ highest degree vertices of from $U_i$ and add them to $D$.
%
By accumulating vertices in $D$, subsequent cliques can potentially utilize these newly added defenders in their respective matchings, thereby minimizing the total size of the set with the help of overlaps. The algorithm terminates after all cliques in $\mathcal{C}$ have been processed, returning the final reduced defensive set $D$. The pseudo-code of the procedure is presented in Algorithm \ref{alg:warmstart}.

\begin{algorithm}[ht]
\caption{Clique-Cover–Based Heuristic for $k$-DefDom}
\label{alg:warmstart}
\begin{algorithmic}[1]
\Require Graph $G=(V,E)$, attack sets $\mathcal{A}$, parameter $k$, clique-cover $\mathsf{Method} \in \{\textsc{DSATUR}, \textsc{PEO}\}$
\Ensure A feasible defender set $D$
\State Compute a clique cover $\mathcal{C}=\{C_1,\ldots,C_m\}$ of $G$ using $\mathsf{Method}$
\State Sort cliques in $\mathcal{C}$ in descending order of cardinality
\State $D \gets \emptyset$
\For{$i$ from $1$ to $m$}
    \State Compute a maximum matching $M$ between $D$ and $C_i$
    \State $U \gets C_i \setminus V(M)$ \Comment{Vertices in $C_i$ not saturated by $M$}
\State $D \gets D \cup \{ \text{top } \min(k, |U|) \text{ vertices in } U \text{ by degree} \}$
\EndFor
\State \Return $D$
\end{algorithmic}
\end{algorithm}

Algorithm \ref{alg:warmstart} is integrated into our Benders Decomposition framework to provide an initial feasible solution, and thus an initial upper bound. The performance of the heuristic algorithm was assessed in the preliminary experiments, where we observed that defender sets derived from a clique-cover can be substantially improved, i.e., reduced in size, by applying the matching-based procedure described above. This effect is particularly pronounced when a heuristic clique cover, rather than an optimal one, is employed. Recall that, there is no other known heuristic method for the $k$-DefDom problem for general graphs. Therefore, the only known (trivial) upper bound is the number of vertices, as it is guaranteed that $V$ is a feasible defensive set. The baseline version of the the clique-cover-based heuristic reduced the average initial upper bound from 200 to 136.42 for Erdős-Rényi graphs across all graph orders and densities. After applying the matching-based reduction, the average upper bound further decreases to 15.78, corresponding to an overall improvement of 92\%. Similarly, overall improvements of 91\% and 96\% were obtained for chordal and Barab\'asi--Albert graphs, respectively. The reader is referred to Appendix \ref{section:Appendix_heuristic} Table \ref{tab:warmstart_result} for the detailed results. We compare the quality of the solutions obtained by the clique-cover-based heuristic to the optimal values in Section \ref{section:IPvsBenders}.

\section{Experimental Results}
\label{section:results}
In this section, we present the results of an extensive numerical study in which the performance of the proposed method is evaluated under several algorithmic settings. We begin by describing the different strategies employed in implementing our Benders Decomposition framework. Next, we describe the instance generation procedure and the computational setup. Finally, we present the experimental results and discuss our findings.

\subsection{Implementation details}
\label{section:implementation}

The Benders decomposition approach can be implemented in two ways: (i) the traditional iterative manner where the MP is solved to optimality in each iteration, or (ii) within a Branch-and-Benders Cut (B\&B Cut) scheme, where the MP is solved via a branch-and-bound method and the Benders cuts are generated dynamically at candidate (integer) solutions of the MP. For the latter approach which is called Branch-and-Benders \citep{gendron2014benders}, we integrate a lazy callback function into the MIP solver which allows us to process the solution at an integer node and add a cut if it is determined to be infeasible with respect to the original problem. The second approach is expected to improve the computational efficiency. 
Both methods are implemented and tested in this computational study, in addition to solving the IP model directly by the MIP solver, as a baseline method.

\paragraph{Solving the subproblems.}
Let $\hat x \in \{0,1\}^{|V|}$ be the optimal solution of a B\&B subproblem. For generating Benders cuts violated at $\hat x$, we consider two fundamentally different approaches. In the first approach, dual subproblem(s) are solved to obtain extreme rays, either in an aggregate or disaggregated fashion. In other words, we either solve \DSP once or \DSPA for each attack $A$. In the former case, if \DSP is unbounded, the single aggregate cut that is generated using the dual extreme ray is added to the node subproblem to cut $\hat x$ off. 
In the latter disaggregated case, each \DSPA is solved sequentially until a violated cut -an attack that cannot be countered-- is found. Then, a single feasibility cut is added to the node subproblem.
If each \DSPA is feasible, no cuts are generated as $\hat x$ is a feasible solution. While the aggregate approach reduces model construction overhead and maintains all dual variables in memory, the decomposed approach offers better modularity and adherence to classical Benders theory at the cost of increased computational overhead from repeatedly constructing and solving multiple smaller dual models, with both methods ultimately converging to the optimal solution through different computational pathways.

The second approach exploits Theorem \ref{thm:combinatorialCut} to generate cuts in a combinatorial fashion, without solving dual subproblems. Instead, all subsets $S$ whose cardinality is at most $k$ are enumerated dynamically and checked for satisfying the first condition in Theorem \ref{thm:combinatorialCut}, i.e., the Hall violator condition of $|\{i\in N[S]: \hat{x}_i=1\}| < |S|$. The additional requirement of $N[j] \not \subseteq N[S]$ for all $j\in A \setminus S$ so that we get an extreme ray is implicitly handled during the cut selection procedure, which is explained in the following.

\paragraph{The multi-cut strategy.}
For effective cut generation, we consider employing a cut-buffer with limited capacity ($C_{max}$) and a recursive depth-first-search in order to sample violated cuts by exploiting the cut strength property defined in Proposition \ref{prop:strongerCut}. When searching for potential Hall violators to obtain a violated cut, we do not begin by exhaustively examining all singleton subsets. Instead, we expand the current set, regardless of whether it is a Hall violator, by considering its supersets. Once we arrive at a superset of size $k$, we backtrack and explore other supersets of size $k$, followed by other supersets of size $k-1$, and so forth.
We terminate the depth-first-search once the maximum allowed number of attempts ($B$) many subsets have been explored given that at least one violated cut has been found. Otherwise, we continue until one is found or the evaluation of all subsets has been exhaustively completed.
Every time a subset $S\subseteq A$ giving a violated Benders cut is found during the search, a procedure for updating the buffer is invoked, which is presented in Algorithm \ref{alg:bufferUpdate}.
The main idea while updating the buffer is that no cut in the buffer is weaker than another. Therefore, we strictly prefer a superset $S'$ over a subset $S$ whenever the condition of Proposition \ref{prop:strongerCut} is satisfied. Furthermore, if the buffer fills up, we employ a maximum violation criterion to replace the least violated cuts, so that the master problem is fed only with the stronger or more violated constraints discovered during the search. 
As an example, choosing $B=1$ and $C_{max}=1$ corresponds to the special case of adding the first found violated cut. The values of these parameters might significantly change the performance of the algorithm and their impact is analyzed numerically in our experiments (See Appendix \ref{section:Parameter} Table \ref{appendix}.)

\begin{algorithm}[ht]
\caption{Cut-buffer Update}
\label{alg:bufferUpdate}
\begin{algorithmic}[1]
\Require Current violator $S_{new}$, current solution $x$, buffer limit $C_{max}$, current cut buffer $\mathcal{C}$
\Ensure Updated $\mathcal{C}$ (possibly with stronger cuts)
\For{$S_{old} \in \mathcal{C}$}
        \If{$S_{old} \subset S_{new}$ \textbf{and} $|S_{new}| - |S_{old}| \ge |N[S_{new}]| - |N[S_{old}]|$}
            \State $\mathcal{C} \leftarrow \mathcal{C} \setminus \{S_{old}\}$ \Comment{$S_{old}$ is discarded since $S_{new}$ gives a stronger cut}
        \ElsIf{$S_{new} \subset S_{old}$ \textbf{and} $|S_{old}| - |S_{new}| \ge |N[S_{old}]| - |N[S_{new}]|$}
            \State \Return \Comment{$S_{new}$ is discarded since a stronger cut is in the buffer}
        \EndIf
    \EndFor
    \If{$|\mathcal{C}| < C_{max}$}
        \State $\mathcal{C} \leftarrow \mathcal{C} \cup \{S_{new}\}$
    \Else \Comment{$S_{new}$ will replace an existing cut based on its violation}
        \State Let $S_{min} \in \mathcal{C}$ be the cut with minimum violation
        \If{$viol_{new} > viol(S_{min})$} \Comment{$viol(S)=|S| - |N[S] \cap \{i \mid x_i = 1\}|$}
            \State $\mathcal{C} \leftarrow (\mathcal{C} \setminus \{S_{min}\}) \cup \{S_{new}\}$
        \EndIf
    \EndIf
\end{algorithmic}
\end{algorithm}


\paragraph{Preprocessing of the attack scenario set.}

Across all solution approaches, we reduce the number of attacks to be considered based on the following result shown in \cite{ekim2023defensive}: A set $D$ of vertices is $k$-defensive if and only if it defends against every $k$-attack $A$ such that $G^2[A]$ is connected, where $G^2$ denotes the square of a graph $G$ which is obtained from $G$ by adding an edge between all pairs of vertices of distance 2. Recalling that each defense ensures that defender nodes can be matched to all attacked nodes, we can express the same condition using Hall’s Theorem as follows: $D$ is $k$-defensive if and only if $|N[A] \cap D| \geq |A|$ for every set $A$ with at most $k$ vertices such that $G^2[A]$ is connected. Accordingly, in all our approaches, instead of generating all possible attack scenarios, only attack scenarios connected in $G^2$ are constructed and considered during optimization. In our experiments, we observe that for Erdős-Rényi graphs almost all attacks are connected in $G^2$. However, for chordal graph instances, considering only attacks which are connected in $G^2$ reduces the number of attacks in our formulation up to 83\% especially for sparse graphs, whereas this reduction is only 3\% for Barab\'asi--Albert graphs.

\subsection{Instance generation and test environment} \label{sec:tests}

We adopt three well established graph generation models while creating our test bed. These are Erdős-Rényi model for completely random graphs \citep{erdos1960evolution}, Barab\'asi-Albert model for scale-free graphs \citep{barabasi1999emergence}, and a PEO-based construction algorithm for chordal graphs, following the framework of \cite{cseker2022generation}. Despite the fact that chordal graphs are very structured, $k$-DefDom is NP-complete for fixed $k$ in chordal graphs (even in split graphs  which form a small subclass of chordal graphs \citep{ekim2020complexity}). Accordingly, the performance of our algorithms on chordal graphs as compared to other instance types, will inform us on the intrinsic difficulty of the problem with respect to the graph structure from an optimization point of view.

As the preliminary experiments indicated heavy dependence of the algorithm performance on the value of $k$ and on the graph type, as one would expect, we consider different graph orders for every value of $k$ we consider in the experiments. 
The number of vertices we consider ranges from 50 to 150 for Erdős-Rényi graphs, 100 to 1000 for Barab\'asi-Albert graphs, and 100 to 3500 for chordal graphs. 
For Erdős-Rényi and chordal graphs, we consider three graph density levels, $p\in\{0.2,0.5,0.8\}$, representing sparse, moderate, and dense graphs, respectively. We exclude the case $p = 0.8$ for Barab\'asi–Albert instances, as this level of density cannot be attained under the standard preferential attachment model, unless the process is initialized with a large clique containing the majority of the vertices, which in turn produces a highly atypical and structurally extreme final graph.
For every $(N,p)$ pair we determined where $N$ is the number of vertices and $p$ is the graph’s density, five random instances are generated using an appropriate model of the relevant graph class. 
While the $G(N,p)$ model was used for Erdős-Rényi graphs, the preferential attachment model is used for Barab\'asi–Albert graphs.
chordal graph instances were generated using the publicly available implementation in \cite{cseker_chordal_github} (\url{https://github.com/oylumseker/Chordal/tree/master)}. 
%
%
To ensure a fair comparison, the same set of random instances was used when comparing different algorithmic settings. Different values of $k$ were tested while increasing the number of vertices, enabling the evaluation of the proposed methods across varying problem sizes and graph structures. As mentioned before,the graph instances used in the experiments are publicly available at \cite{github} \url{(https://github.com/bilgeevrl/k-DefDom-Graph-Instances)}

All algorithms were implemented in C++ with Gurobi 12.0 as the mixed-integer programming solver and tested on macOS using a MacBook with an M2 processor and 16 GB of RAM. To ensure a consistent computational setting, a time limit of 600 seconds per instance was used throughout all experiments. Gurobi was used at its default settings to solve the mixed-integer programming formulations, including the formulation (kDD) and the Benders Master Problem. One exception to the Gurobi settings is related to the use of the clique-cover-based heuristic.
 In the experiments where Algorithm \ref{alg:warmstart} is used to obtain an initial feasible solution to feed the model with a warm start,
 Gurobi’s built-in primal heuristics
 were disabled (Heuristics=0.0, RINS=0), and MIPFocus=3 was used, to prioritize bound improvement. This configuration was adopted because it produced the best overall performance in preliminary tests. Detailed results of the clique-cover-based heuristic are provided in Table~\ref{tab:warmstart_result} in Appendix \ref{section:Appendix_heuristic}.
 

\subsection{Comparison of all solution approaches}
\label{section:IPvsBenders}
The following Table \ref{tab:abbreviations} summarizes all abbreviations used in the computational tables throughout the study, including the algorithm variants, performance metrics, and graph parameters.

\begin{table}[H]
\caption{Summary of abbreviations used in the computational results}
\label{tab:abbreviations}
\centering
\small
\begin{tabular}{lp{11cm}}
\toprule
\textbf{Abbreviation} & \textbf{Description} \\
\midrule
IP          & Integer programming formulation for $k$-DefDom (kDD) \\
Benders     & Classical Benders decomposition with aggregated cuts \\
BBA         & B\&B Cut with aggregated cuts and LP-based subproblem (\DSP) \\
BBLP        & B\&B Cut with decomposed cuts and LP-based subproblem (\DSPA) \\
BBC         & B\&B Cut with decomposed cuts and combinatorial procedure (one cut per iteration) \\
BBMC        & B\&B Cut with decomposed cuts and the combinatorial procedure that generates multiple cuts per iteration using a cut buffer (Algorithm \ref{alg:bufferUpdate})\\
BBMC+IC     & BBMC with initial cut generation heuristic \\
BBMC+H      & BBMC with clique-cover-based warm-start heuristic \\
BBMC++   & BBMC with initial cuts and warm-start heuristic combined \\
Opt.        & Number of instances solved to optimality (out of 5) \\
Time        & Average total time in seconds (average of 5 instances)\\
Gap         & Average optimality gap (\%) computed over instances with positive gap only \\
$UB_0$      & Initial upper bound provided by the warm-start heuristic (average of 5 instances) \\
$LB_0$      & Initial lower bound provided by the initial cut generation heuristic (average of 5 instances) \\
\bottomrule
\end{tabular}
\end{table}

Figure~\ref{fig1} presents a performance comparison of all approaches from IP to BBMC++ on the same 75 Erd\H{o}s--R\'enyi instances, showing the number of optimally solved instances and the average optimality gap. Erd\H{o}s--R\'enyi graphs are used at this stage to develop and compare the algorithm variants, as they provide a controlled setting to evaluate the impact of each enhancement. As the methods progress from IP to BBMC++, the number of optimally solved instances increases from 15/75 to 63/75, and the average optimality gap decreases from 55.3\% to 4.2\%, corresponding to a 92.4\% improvement over the IP formulation. The results obtained on Erd\H{o}s--R\'enyi instances reveal a consistent and significant improvement in both solution quality and optimality gap as the methods advance from IP to the best performing variant BBMC++, where BBMC++ denotes the combined variant using both the initial cut generation heuristic and the warm-start heuristic.

We first identify BBMC as the most promising approach with an in depth analysis of the results in Tables \ref{tab:comparison1} and \ref{tab:comparison2} for Erd\H{o}s--R\'enyi graphs. Then we extend the experiments to chordal and Barab\'asi--Albert graph families to assess the performance of the heuristics. Graphics similar to Figure \ref{fig1} for chordal and Barab\'asi--Albert graphs (in Figures \ref{Chordal1} and \ref{Barabasi1}, respectively) will reveal that the improvement of the best performing variant BBMC++ over the IP formulation is even more striking for chordal and Barab\'asi--Albert graphs.


\begin{figure}[ht]
  \centering
    \includegraphics[width=0.9\textwidth]{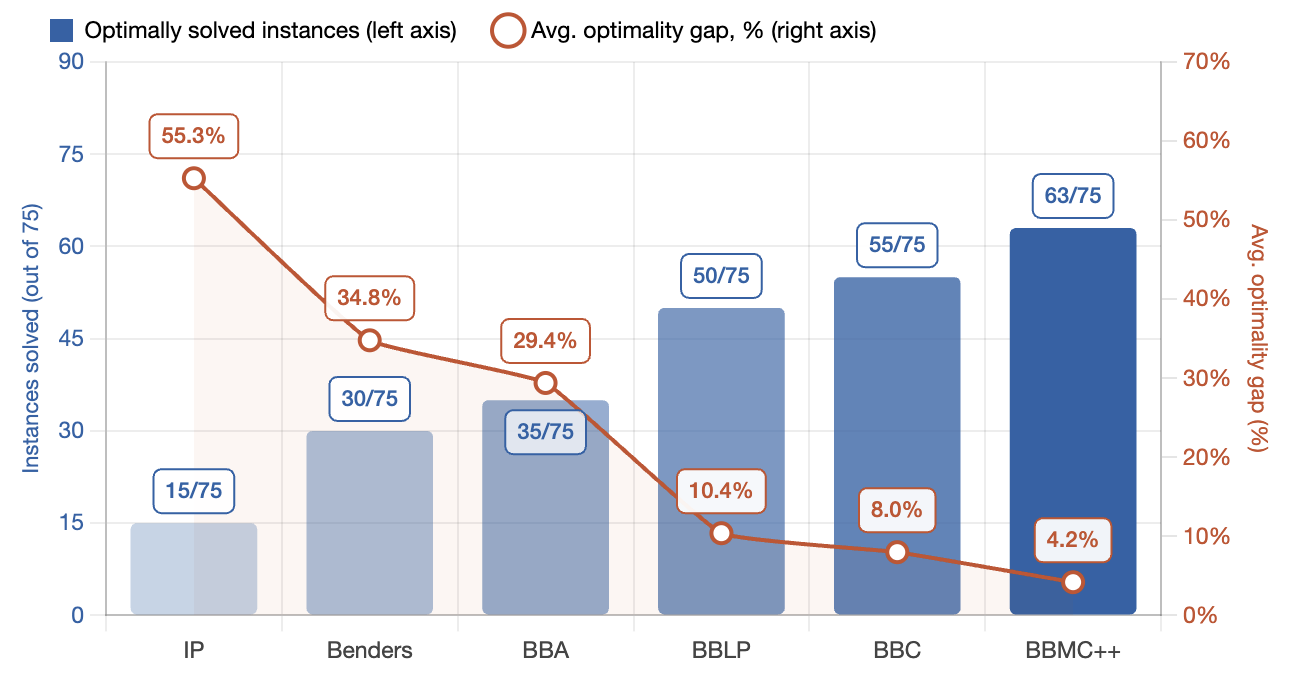}
    \caption{Performance comparison of IP to BBMC++ on the same Erdős–Rényi graphs: optimally solved instances (bars) and average optimality gap over all instances (line)}\label{fig1}
\end{figure}


Table~\ref{tab:comparison1} presents a detailed comparison of the performances the IP formulation (kDD), the classical (iterative) Benders Decomposition, and the B\&B Cut approach where the cuts are added within the branch-and-bound tree using solver callbacks. At this stage of the experiments, the \textit{aggregate} dual subproblem \DSP is solved for cut generation. These exact methods are evaluated on Erd\H{o}s--R\'enyi graph instances with 15 configurations of attack size $k$, graph order $N$, and density $p$.

\begin{table}[H]
    \centering
    \caption{Comparison of IP Formulation, Classical Benders Decomposition, and Branch-and-Benders Cut}
    \label{tab:comparison1}
    \resizebox{\textwidth}{!}{
    \begin{tabular}{ccc|cccc|cccc|cccc}
        \toprule
        \multicolumn{3}{c|}{\textbf{Instance}} 
        & \multicolumn{4}{c|}{\textbf{IP}} 
        & \multicolumn{4}{c|}{\textbf{Benders}} 
        & \multicolumn{4}{c}{\textbf{BBA}} \\
        $k$ & $N$ & $p$ 
        & Opt. & Constr.(s) & Time(s) & Gap(\%) 
        & Opt. & Time(s) & Gap(\%) & \#Cuts 
        & Opt. & Time(s) & Gap(\%) & \#Cuts \\
        \bottomrule
        \multirow{3}{*}{2} & \multirow{3}{*}{50}  
        & 0.2 & 5/5 & 0.1 & 21.5 & 0.0  
        & 5/5 & 28.0 & 0.0 & 274  
        & 5/5 & 8.7 & 0.0 & 254 \\
        & & 0.5 & 5/5 & 0.2 & 123.5 & 0.0  
        & 5/5 & 43.3 & 0.0 & 194  
        & 5/5 & 31.8 & 0.0 & 195 \\
        & & 0.8 & 5/5 & 0.2 & 146.6 & 0.0  
        & 5/5 & 6.7 & 0.0 & 29   
        & 5/5 & 3.5 & 0.0 & 12 \\
        \midrule
        \multirow{3}{*}{2} & \multirow{3}{*}{100} 
        & 0.2 & 0/5 & 1.0 & -- & 48.8 
        & 0/5 & -- & 40.0 & 1665  
        & 0/5 & -- & 34.6 & 445 \\
        & & 0.5 & 0/5 & 1.6 & -- & 56.8 
        & 0/5 & -- & 34.5 & 1192  
        & 0/5 & -- & 25.0 & 76 \\
        & & 0.8 & 0/5 & 2.0 & -- & 20.0 
        & 5/5 & 580.2 & 0.0 & 213   
        & 5/5 & 514.1 & 0.0 & 52 \\
        \midrule
        \multirow{3}{*}{2} & \multirow{3}{*}{150} 
        & 0.2 & 0/5 & 3.9 & -- & 62.3 
        & 0/5 & -- & 50.0 & 2565  
        & 0/5 & -- & 31.8 & 46 \\
        & & 0.5 & 0/5 & 6.0 & -- & 98.0 
        & 0/5 & -- & 42.9 & 1099  
        & 0/5 & -- & 31.0 & 17 \\
        & & 0.8 & 0/5 & 7.5 & -- & 98.0 
        & 0/5 & -- & 25.0 & 760   
        & 5/5 & 580.1 & 0.0 & 7 \\
        \bottomrule
        \multirow{3}{*}{3} & \multirow{3}{*}{50}  
        & 0.2 & 0/5 & 2.1 & -- & 51.4 
        & 0/5 & -- & 30.0 & 738   
        & 0/5 & -- & 19.2 & 63 \\
        & & 0.5 & 0/5 & 3.7 & -- & 65.3 
        & 5/5 & 444.4 & 0.0 & 969   
        & 5/5 & 481.5 & 0.0 & 12 \\
        & & 0.8 & 0/5 & 4.8 & -- & 29.3 
        & 5/5 & 583.9 & 0.0 & 53    
        & 5/5 & 570.1 & 0.0 & 6 \\
        \midrule
        \multirow{3}{*}{3} & \multirow{3}{*}{100} 
        & 0.2 & 0/5 & 43.5 & -- & 100.0 
        & 0/5 & -- & 100.0 & 189   
        & 0/5 & -- & 100.0 & 51 \\
        & & 0.5 & 0/5 & 69.2 & -- & 100.0 
        & 0/5 & -- & 100.0 & 258   
        & 0/5 & -- & 100.0 & 10 \\
        & & 0.8 & 0/5 & 86.6 & -- & 100.0 
        & 0/5 & -- & 100.0 & 120   
        & 0/5 & -- & 100.0 & 10 \\
        \bottomrule
        \multicolumn{3}{c|}{\textbf{Total Opt.}}
        & \multicolumn{4}{c|}{\textbf{15/75}}
        & \multicolumn{4}{c|}{\textbf{30/75}}
        & \multicolumn{4}{c}{\textbf{35/75}} \\

        \multicolumn{3}{c|}{\textbf{Avg. Gap}}
        & \multicolumn{4}{c|}{\textbf{55.3}}
        & \multicolumn{4}{c|}{\textbf{34.8}}
        & \multicolumn{4}{c}{\textbf{29.4}} \\

        \multicolumn{3}{c|}{\textbf{$\Delta$Gap vs IP}}
        & \multicolumn{4}{c|}{\textbf{--}}
        & \multicolumn{4}{c|}{\textbf{37.1\%}}
        & \multicolumn{4}{c}{\textbf{46.8\%}} \\
        \bottomrule
    \end{tabular}
    }
    \label{tab:comparison_final}
    \footnotesize{\textbf{Abbreviations:} 
Constr. = Average construction time (sec),
Cuts = Average number of cuts added,
-- = No instance in the corresponding category was solved to optimality within the time limit; therefore, no average total time is reported.
}
\end{table}


The results indicate a clear performance pattern. For smaller instances ($N=50$), all approaches solve most cases to optimality, although BBA is generally faster than both the IP formulation and classical Benders decomposition. However, as the instance size increases, the IP formulation shows clear scalability limitations, with optimality gaps growing substantially and reaching up to $100.0\%$. In addition, for many larger instances, no instance could be solved to optimality within the time limit. Classical Benders performs better than IP in several cases, especially for denser graphs, but BBA emerges as the strongest overall approach, attaining lower average total time and smaller gap, and solving several dense instances to optimality when the other methods fail to do so. This trend is also reflected in the summary, where both Benders decomposition based approaches improve upon the IP formulation in terms of average total runtime, with BBA yielding the best overall performance. It yields the smallest average optimality gap, \(29.4\%\), which corresponds to a \(46.8\%\) improvement over the IP formulation.

Table~\ref{tab:comparison2} compares four variants of the B\&B Cut framework on the same set of instances used in the experiments of Table \ref{tab:comparison1}. For BBMC, we determine the number of attempts $B=50,000$ and the buffer size of $C_{\max}=50$ based on careful consideration and computational experiments. Values that are chosen too small or too large may negatively affect the solution process. The preliminary computational results used for parameter selection are reported in Appendix \ref{section:Parameter} Table \ref{appendix}.

\begin{table}[H]
\centering
\caption{Performance of the Branch-and-Benders-Cut algorithm under different cut generation mechanisms}
\label{tab:comparison2}
\resizebox{\textwidth}{!}{
\begin{tabular}{ccc|cccc|cccc|cccc|cccc}
\toprule
\multicolumn{3}{c|}{\textbf{Inst.}} &
\multicolumn{4}{c|}{\textbf{BBA}} &
\multicolumn{4}{c|}{\textbf{BBLP}} &
\multicolumn{4}{c|}{\textbf{BBC}} &
\multicolumn{4}{c}{\textbf{BBMC}} \\
$k$ & $N$ & $p$
& Opt. & Time & Gap & Cuts
& Opt. & Time & Gap & Cuts
& Opt. & Time & Gap & Cuts
& Opt. & Time & Gap & Cuts \\
\bottomrule
\multirow{3}{*}{2} & \multirow{3}{*}{50}
& 0.2 & 5/5 & 8.7 & 0.0 & 254 & 5/5 & 1.7 & 0.0 & 295 & 5/5 & 0.5 & 0.0 & 310 & 5/5 & 0.2 & 0.0 & 668 \\
& & 0.5 & 5/5 & 31.8 & 0.0 & 195 & 5/5 & 2.3 & 0.0 & 259 & 5/5 & 0.2 & 0.0 & 309 & 5/5 & 0.2 & 0.0 & 632 \\
& & 0.8 & 5/5 & 3.5 & 0.0 & 12 & 5/5 & 0.3 & 0.0 & 53 & 5/5 & 0.3 & 0.0 & 36 & 5/5 & 0.1 & 0.0 & 423 \\
\midrule
\multirow{3}{*}{2} & \multirow{3}{*}{100}
& 0.2 & 0/5 & -- & 34.6 & 445 & 5/5 & 289.1 & 0.0 & 1773 & 5/5 & 311.5 & 0.0 & 1817 & 5/5 & 278.9 & 0.0 & 1825 \\
& & 0.5 & 0/5 & -- & 25.0 & 76 & 5/5 & 78.3 & 0.0 & 1269 & 5/5 & 6.2 & 0.0 & 911 & 5/5 & 9.7 & 0.0 & 1466 \\
& & 0.8 & 5/5 & 514.1 & 0.0 & 52 & 5/5 & 36.6 & 0.0 & 517 & 5/5 & 0.3 & 0.0 & 164 & 5/5 & 0.4 & 0.0 & 874 \\
\midrule
\multirow{3}{*}{2} & \multirow{3}{*}{150}
& 0.2 & 0/5 & -- & 31.8 & 46 & 0/5 & -- & 30.8 & 3316 & 0/5 & -- & 35.7 & 3446 & 0/5 & -- & 28.5 & 3591 \\
& & 0.5 & 0/5 & -- & 31.0 & 17 & 0/5 & -- & 16.7 & 2399 & 0/5 & -- & 28.6 & 2608 & 3/5 & 433.6 & 16.7 & 3272 \\
& & 0.8 & 5/5 & 580.1 & 0.0 & 7 & 5/5 & 412.5 & 0.0 & 1664 & 5/5 & 8.1 & 0.0 & 1727 & 5/5 & 3.0 & 0.0 & 2204 \\
\bottomrule
\multirow{3}{*}{3} & \multirow{3}{*}{50}
& 0.2 & 0/5 & -- & 19.2 & 63 & 5/5 & 53.5 & 0.0 & 781 & 5/5 & 3.6 & 0.0 & 1261 & 5/5 & 4.7 & 0.0 & 2079 \\
& & 0.5 & 5/5 & 481.5 & 0.0 & 12 & 5/5 & 135.8 & 0.0 & 1204 & 5/5 & 2.6 & 0.0 & 940 & 5/5 & 1.8 & 0.0 & 1660 \\
& & 0.8 & 5/5 & 570.1 & 0.0 & 6 & 5/5 & 16.3 & 0.0 & 118 & 5/5 & 0.4 & 0.0 & 52 & 5/5 & 1.0 & 0.0 & 805 \\
\midrule
\multirow{3}{*}{3} & \multirow{3}{*}{100}
& 0.2 & 0/5 & -- & 100.0 & 51 & 0/5 & -- & 40.0 & 2205 & 0/5 & -- & 30.8 & 8777 & 0/5 & -- & 24.4 & 9511 \\
& & 0.5 & 0/5 & -- & 100.0 & 10 & 0/5 & -- & 28.6 & 1863 & 0/5 & -- & 25.0 & 9580 & 4/5 & 345.1 & 2.9 & 11413 \\
& & 0.8 & 0/5 & -- & 100.0 & 10 & 0/5 & -- & 40.0 & 397 & 5/5 & 22.5 & 0.0 & 666 & 5/5 & 8.3 & 0.0 & 2454 \\
\bottomrule
\multicolumn{3}{c|}{\textbf{Total Opt.}} 
& \multicolumn{4}{c|}{\textbf{35/75}} 
& \multicolumn{4}{c|}{\textbf{50/75}} 
& \multicolumn{4}{c|}{\textbf{55/75}} 
& \multicolumn{4}{c}{\textbf{62/75}} \\

\multicolumn{3}{c|}{\textbf{Avg. Gap}} 
& \multicolumn{4}{c|}{\textbf{29.4}} 
& \multicolumn{4}{c|}{\textbf{10.4}} 
& \multicolumn{4}{c|}{\textbf{8.0}} 
& \multicolumn{4}{c}{\textbf{4.8}} \\

\multicolumn{3}{c|}{\textbf{$\Delta$Gap vs BBA}} 
& \multicolumn{4}{c|}{\textbf{--}} 
& \multicolumn{4}{c|}{\textbf{64.6\%}} 
& \multicolumn{4}{c|}{\textbf{72.8\%}} 
& \multicolumn{4}{c}{\textbf{83.7\%}} \\
\bottomrule
\end{tabular}}
\end{table}


The computational results show a clear performance difference among the four variants. For smaller instances ($N=50$), most methods solve the instances to optimality, but the combinatorial variants, BBC and BBMC, are generally faster than the LP-based variants BBA and BBLP. As the problem size increases, these differences become more visible. BBA appears to be the weakest approach overall, often failing to solve larger instances to optimality and leaving the largest optimality gaps. BBLP improves over BBA, especially on medium-sized instances, but it still has difficulty on harder cases such as $k=2$, $N=150$ and $k=3$, $N=100$.
Among the combinatorial variants, BBC already improves the results significantly by closing the gap in several cases where the LP-based methods cannot. However, BBMC gives the strongest overall performance. It generally achieves either the smallest optimality gap or the largest number of optimally solved instances, and in many of the difficult instance classes it is also the fastest approach. This can be seen clearly for $k=2$, $N=150$, $p=0.5$, where BBMC is the only method that solves some of the instances to optimality, and for $k=3$, $N=100$, where BBMC performs better than the other variants in terms of the final optimality gap. Although BBMC may generate more cuts than the other methods, generating cuts through the combinatorial subproblem is computationally more efficient than solving the LP-based subproblem, allowing a larger number of cuts to be added in a shorter amount of time. Overall, the results indicate that using the combinatorial procedure to generate cuts instead of LP-based subproblems strengthens the framework, and that generating multiple cuts in this setting makes BBMC the most scalable variant among the compared approaches.

In the next step, we assess the impact of integrating the heuristics proposed in Section \ref{section:heuristics} into the current best setting BBMC, for initial cut generation and for a warm start across various instance types. In Tables \ref{tab:er_selected}, \ref{tab:chordal_selected} and \ref{tab:ba_selected}, the quality of the generated initial cuts was assessed by solving the LP relaxation of the master problem with these cuts and recording the objective as an initial lower bound ($LB_0$). Since the lower bound is zero before the introduction of any initial cuts, this procedure provides a direct way to evaluate how much the added cuts tighten the relaxation. 
For the BBMC+H setting, the size of the heuristic defensive set determines the initial upper bound ($UB_0$). The average final best bounds found among all four setting are presented in the last column. For the instances that can be solved optimally by one of the settings, this bound is equal to the optimal objective value.

\begin{table}[H]
\caption{Comparison of algorithm variants on Erd\H{o}s--R\'enyi graphs.}
\label{tab:er_selected}
\centering
\resizebox{\textwidth}{!}{
\begin{tabular}{c@{\hspace{10pt}}c c|
                ccc|
                cccc|
                cccc|
                ccc|
                c}
\toprule
& & 
& \multicolumn{3}{c|}{BBMC}
& \multicolumn{4}{c|}{BBMC+IC}
& \multicolumn{4}{c|}{BBMC+H}
& \multicolumn{3}{c|}{BBMC++}
& Obj. \\
$k$ & $N$ & $p$
& Opt. & Time & Gap
& Opt. & Time & Gap & $LB_0$
& Opt. & Time & Gap & $UB_0$
& Opt. & Time & Gap
& value \\
\bottomrule

\multirow{12}{*}{\centering $2$}
& \multirow{3}{*}{75}
& 0.2
  & 5/5 & 6.1 & 0.0
  & 5/5 & 4.3 & 0.0 & 4.1
  & 5/5 & 5.3 & 0.0 & 14.4
  & 5/5 & 6.6 & 0.0
  & 9.6 \\
& & 0.5
  & 5/5 & 1.5 & 0.0
  & 5/5 & 0.7 & 0.0  & 2.1
  & 5/5 & 0.6 & 0.0  & 8.2
  & 5/5 & 1.1 & 0.0
  & 5.0 \\
& & 0.8
  & 5/5 & 0.1 & 0.0
  & 5/5 & 0.0 & 0.0  & 2.0
  & 5/5 & 0.1 & 0.0  & 10.0
  & 5/5 & 0.0 & 0.0
  & 3.0 \\
\cmidrule{2-18}

& \multirow{3}{*}{100}
& 0.2
  & 5/5 & 278.9 & 0.0
  & 4/5 & 271.4 & 1.8 & 3.7
  & 5/5 & 269.6 & 0.0 & 14.4
  & 5/5 & 220.8 & 0.0
  & 10.6 \\
& & 0.5
  & 5/5 & 9.7 & 0.0
  & 5/5 & 8.3 & 0.0 & 2.2
  & 5/5 & 6.8 & 0.0 & 9.0
  & 5/5 & 7.5 & 0.0
  & 5.0 \\
& & 0.8
  & 5/5 & 0.4 & 0.0
  & 5/5 & 0.2 & 0.0 & 2.0
  & 5/5 & 0.3 & 0.0 & 11.6
  & 5/5 & 0.2 & 0.0
  & 3.0 \\
\cmidrule{2-18}

& \multirow{3}{*}{125}
& 0.2
  & 0/5 & -- & 16.8
  & 0/5 & -- & 20.0 & 3.6
  & 0/5 & -- & 20.0 & 17.0
  & 0/5 & -- & 16.7
  & 9.8 \\
& & 0.5
  & 5/5 & 79.8 & 0.0
  & 5/5 & 131.0 & 0.0 & 2.2
  & 5/5 & 89.9 & 0.0 & 10.6
  & 5/5 & 108.6 & 0.0
  & 5.0 \\
& & 0.8
  & 5/5 & 0.8 & 0.0
  & 5/5 & 0.7 & 0.0 & 2.0
  & 5/5 & 0.9 & 0.0 & 12.6
  & 5/5 & 0.5 & 0.0
  & 3.0 \\
\cmidrule{2-18}

& \multirow{3}{*}{150}
& 0.2
  & 0/5 & -- & 28.5
  & 0/5 & -- & 28.5 & 3.8
  & 0/5 & -- & 27.3 & 17.0
  & 0/5 & -- & 29.6
  & 9.0 \\
& & 0.5
  & 3/5 & 433.6 & 16.7
  & 3/5 & 395.8 & 6.7 & 2.0
  & 3/5 & 407.2 & 6.7 & 11.4
  & 4/5 & 346.8 & 3.3
  & 5.4 \\
& & 0.8
  & 5/5 & 3.0 & 0.0
  & 5/5 & 2.4 & 0.0 & 2.0
  & 5/5 & 2.3 & 0.0 & 13.8
  & 5/5 & 1.5 & 0.0
  & 3.0 \\

\bottomrule

\multirow{9}{*}{\centering $3$}
& \multirow{3}{*}{75}
& 0.2
  & 4/5 & 285.1 & 1.7
  & 4/5 & 271.1 & 1.7 & 4.8
  & 4/5 & 228.8 & 1.7 & 15.6
  & 5/5 & 153.3 & 0.00
  & 11.2 \\
& & 0.5
  & 5/5 & 18.4 & 0.0
  & 5/5 & 21.9 & 0.0 & 3.3
  & 5/5 & 17.5 & 0.0 & 8.6
  & 5/5 & 16.5 & 0.0
  & 6.0 \\
& & 0.8
  & 5/5 & 2.9 & 0.0
  & 5/5 & 0.9 & 0.0 & 3.0
  & 5/5 & 1.1 & 0.0 & 11.6
  & 5/5 & 0.7 & 0.0
  & 4.0 \\
\cmidrule{2-18}

& \multirow{3}{*}{100}
& 0.2
  & 0/5 & -- & 24.4
  & 0/5 & -- & 24.9 & 3.9
  & 0/5 & -- & 26.8 & 14.6
  & 0/5 & -- & 26.8
  & 9.8 \\
& & 0.5
  & 4/5 & 345.1 & 2.9
  & 4/5 & 312.3 & 2.9 & 3.2
  & 4/5 & 337.3 & 2.9 & 9.0
  & 4/5 & 295.3 & 2.8
  & 6.0 \\
& & 0.8
  & 5/5 & 8.3 & 0.0
  & 5/5 & 3.4 & 0.0 & 3.0
  & 5/5 & 4.3 & 0.0 & 12.8
  & 5/5 & 4.1 & 0.0
  & 4.0 \\
\cmidrule{2-18}

& \multirow{3}{*}{125}
& 0.2
  & 0/5 & -- & 40.0
  & 0/5 & -- & 39.9 & 9.0
  & 0/5 & -- & 40.3 & 16.6
  & 0/5 & -- & 39.9
  & 15.2 \\
& & 0.5
  & 0/5 & -- & 28.6
  & 0/5 & -- & 28.6 & 5.0
  & 0/5 & -- & 28.6 & 10.2
  & 0/5 & -- & 28.5
  & 7.0 \\
& & 0.8
  & 5/5 & 38.4 & 0.0
  & 5/5 & 35.5 & 0.0 & 3.0
  & 5/5 & 34.1 & 0.0 & 14.0
  & 5/5 & 24.4 & 0.0
  & 4.0 \\

\bottomrule

\multicolumn{3}{c|}{\textbf{Total Opt.}}
& \multicolumn{3}{c|}{\textbf{76/105}}
& \multicolumn{4}{c|}{\textbf{75/105}}
& \multicolumn{4}{c|}{\textbf{76/105}}
& \multicolumn{3}{c|}{\textbf{78/105}}
 \\

\multicolumn{3}{c|}{\textbf{Avg. Gap}}
& \multicolumn{3}{c|}{\textbf{7.6}}
& \multicolumn{4}{c|}{\textbf{7.4}}
& \multicolumn{4}{c|}{\textbf{7.3}}
& \multicolumn{3}{c|}{\textbf{7.0}}
 \\

\multicolumn{3}{c|}{\textbf{$\Delta$Gap vs BBMC}}
& \multicolumn{3}{c|}{\textbf{--}}
& \multicolumn{4}{c|}{\textbf{2.6\%}}
& \multicolumn{4}{c|}{\textbf{3.9\%}}
& \multicolumn{3}{c|}{\textbf{7.9\%}}
 \\
\bottomrule
\end{tabular}
}
\end{table}


The results reported in Table~\ref{tab:er_selected} provide a comparison of four variants of the B\&B Cut framework on Erd\H{o}s--R\'enyi graph instances with additional $k$, $N$, and $p$ combinations. The results show that all four variants perform well on relatively small or dense instances, where most instances are solved optimally with very small computational times. Especially for larger values of $N$ and $k$ and for sparser instances, the differences among the variants become clearer and the additional components generally improve the overall behavior. In particular, the use of initial cuts and heuristic information helps the algorithm start from stronger bounds, which often leads to smaller final gaps and shorter computational times. The combined variant BBMC++ gives the most robust overall performance, frequently attaining the best computational times on the more difficult instance classes and solving more instances to optimality than the other variants. This advantage is especially visible for cases such as $k=2$, $N=150$, $p=0.5$, and $k=3$, $N=75$, $p=0.2$, where the combined approach clearly improves either the final optimality gap or the number of optimally solved instances. The table indicates that strengthening the B\&B Cut framework with both initial cuts and a heuristic warm start leads to better computational performance on harder Erd\H{o}s--R\'enyi instances. 

The computational results on chordal graph instances are reported in Table~\ref{tab:chordal_selected}. The only difference among the solution approaches appears in the clique-cover-based heuristic variant (Algorithm \ref{alg:warmstart}). The clique cover for the heuristic solution is obtained using a perfect elimination ordering (PEO) instead of DSATUR algorithm, to exploit the structural properties of chordal graphs.
%
\begin{table}[H]
\caption{Comparison of algorithm variants on chordal graphs.}
\label{tab:chordal_selected}
\centering
\resizebox{\textwidth}{!}{
\begin{tabular}{c@{\hspace{10pt}}c c|
                ccc|
                cccc|
                cccc|
                ccc|
                c}
\toprule
& &
& \multicolumn{3}{c|}{BBMC}
& \multicolumn{4}{c|}{BBMC+IC}
& \multicolumn{4}{c|}{BBMC+H}
& \multicolumn{3}{c|}{BBMC++}
& Obj. \\
$k$ & $N$ & $p$
& Opt. & Time & Gap
& Opt. & Time & Gap & $LB_0$
& Opt. & Time & Gap & $UB_0$
& Opt. & Time & Gap
& value \\
\bottomrule

\multirow{9}{*}{\centering $2$}
& \multirow{3}{*}{2500}
& 0.2
  & 5/5 & 70.3 & 0.0
  & 5/5 & 50.7 & 0.0 & 5.6
  & 5/5 & 71.2 & 0.0 & 20.2
  & 5/5 & 55.4 & 0.0
  & 10.4 \\
& & 0.5
  & 5/5 & 183.0 & 0.0
  & 5/5 & 183.2 & 0.0 & 2.4
  & 5/5 & 189.1 & 0.0 & 7.2
  & 5/5 & 186.4 & 0.0
  & 4.0 \\
& & 0.8
  & 5/5 & 309.7 & 0.0
  & 5/5 & 268.1 & 0.0 & 1.9
  & 5/5 & 255.7 & 0.0 & 2.0
  & 5/5 & 257.9 & 0.0
  & 2.0 \\
\cmidrule{2-18}

& \multirow{3}{*}{3000}
& 0.2
  & 5/5 & 132.2 & 0.0
  & 5/5 & 100.7 & 0.0 & 5.4
  & 5/5 & 120.1 & 0.0 & 19.6
  & 5/5 & 91.4 & 0.0
  & 10.6 \\
& & 0.5
  & 5/5 & 321.5 & 0.0
  & 5/5 & 270.6 & 0.0 & 2.4
  & 5/5 & 312.4 & 0.0 & 8.0
  & 5/5 & 302.9 & 0.0
  & 4.4 \\
& & 0.8
  & 3/5 & 545.7 & 40.0
  & 4/5 & 347.3 & 33.3 & 2.0
  & 5/5 & 443.2 & 0.0 & 2.0
  & 5/5 & 450.1 & 0.0
  & 2.0 \\
\cmidrule{2-18}

& \multirow{3}{*}{3500}
& 0.2
  & 5/5 & 184.0 & 0.0
  & 5/5 & 147.5 & 0.0 & 5.8
  & 5/5 & 216.6 & 0.0 & 18.6
  & 5/5 & 181.5 & 0.0
  & 10.2 \\
& & 0.5
  & 5/5 & 510.0 & 0.0
  & 5/5 & 397.5 & 0.0 & 2.6
  & 5/5 & 497.7 & 0.0 & 7.2
  & 5/5 & 487.2 & 0.0
  & 4.4 \\
& & 0.8
  & 0/5 & -- & 99.0
  & 0/5 & -- & 99.2 & 2.0
  & 0/5 & -- & 98.8 & 2.5
  & 4/5 & 1.3 & 33.3
  & 2.2 \\

\bottomrule

\multirow{9}{*}{\centering $3$}
& \multirow{3}{*}{300}
& 0.2
  & 5/5 & 63.4 & 0.0
  & 5/5 & 23.3 & 0.0 & 11.8
  & 5/5 & 64.0 & 0.0 & 54.6
  & 5/5 & 26.4 & 0.0
  & 24.0 \\
& & 0.5
  & 5/5 & 63.5 & 0.0
  & 5/5 & 43.9 & 0.0 & 6.5
  & 5/5 & 66.7 & 0.0 & 34.0
  & 5/5 & 40.7 & 0.0
  & 11.8 \\
& & 0.8
  & 5/5 & 32.6 & 0.0
  & 5/5 & 43.2 & 0.0 & 3.1
  & 5/5 & 30.8 & 0.0 & 12.2
  & 5/5 & 23.2 & 0.0
  & 5.4 \\
\cmidrule{2-18}

& \multirow{3}{*}{400}
& 0.2
  & 5/5 & 194.2 & 0.0
  & 5/5 & 82.2 & 0.0 & 13.6
  & 5/5 & 197.7 & 0.0 & 66.4
  & 5/5 & 99.0 & 0.0
  & 28.0 \\
& & 0.5
  & 5/5 & 143.9 & 0.0
  & 5/5 & 150.0 & 0.0 & 5.4
  & 5/5 & 166.2 & 0.0 & 32.8
  & 5/5 & 110.7 & 0.0
  & 11.0 \\
& & 0.8
  & 5/5 & 142.0 & 0.0
  & 5/5 & 115.3 & 0.0 & 3.0
  & 5/5 & 130.3 & 0.0 & 18.0
  & 5/5 & 108.9 & 0.0
  & 5.6 \\
\cmidrule{2-18}

& \multirow{3}{*}{500}
& 0.2
  & 3/5 & 481.2 & 34.4
  & 5/5 & 195.9 & 0.0 & 14.5
  & 4/5 & 525.8 & 16.0 & 72.8
  & 5/5 & 262.9 & 0.0
  & 29.6 \\
& & 0.5
  & 3/5 & 525.5 & 15.4
  & 5/5 & 456.0 & 0.0 & 5.9
  & 2/5 & 523.3 & 37.5 & 38.8
  & 5/5 & 428.3 & 0.0
  & 11.2 \\
& & 0.8
  & 5/5 & 338.8 & 0.0
  & 5/5 & 417.3 & 0.0 & 3.3
  & 5/5 & 343.6 & 0.0 & 16.8
  & 5/5 & 229.8 & 0.0
  & 5.8 \\

\bottomrule

\multirow{9}{*}{\centering $4$}
& \multirow{3}{*}{100}
& 0.2
  & 5/5 & 16.2 & 0.0
  & 5/5 & 5.6 & 0.0 & 9.4
  & 5/5 & 14.2 & 0.0 & 34.0
  & 5/5 & 6.4 & 0.0
  & 21.8 \\
& & 0.5
  & 5/5 & 14.9 & 0.0
  & 5/5 & 8.8 & 0.0 & 5.0
  & 5/5 & 10.4 & 0.0 & 19.4
  & 5/5 & 6.6 & 0.0
  & 9.6 \\
& & 0.8
  & 5/5 & 11.8 & 0.0
  & 5/5 & 11.2 & 0.0 & 4.0
  & 5/5 & 8.0 & 0.0 & 11.6
  & 5/5 & 7.1 & 0.0
  & 4.4 \\
\cmidrule{2-18}

& \multirow{3}{*}{150}
& 0.2
  & 5/5 & 140.6 & 0.0
  & 5/5 & 64.5 & 0.0 & 9.5
  & 5/5 & 155.4 & 0.0 & 42.2
  & 5/5 & 76.4 & 0.0
  & 23.4 \\
& & 0.5
  & 5/5 & 80.8 & 0.0
  & 5/5 & 89.2 & 0.0 & 4.6
  & 5/5 & 86.7 & 0.0 & 25.6
  & 5/5 & 71.3 & 0.0
  & 10.6 \\
& & 0.8
  & 5/5 & 73.1 & 0.0
  & 5/5 & 80.2 & 0.0 & 4.0
  & 5/5 & 77.2 & 0.0 & 14.8
  & 5/5 & 51.4 & 0.0
  & 4.6 \\
\cmidrule{2-18}

& \multirow{3}{*}{200}
& 0.2
  & 2/5 & 421.7 & 94.0
  & 5/5 & 311.9 & 0.0 & 10.4
  & 2/5 & 473.1 & 68.5 & 47.8
  & 5/5 & 385.6 & 0.0
  & 25.2 \\
& & 0.5
  & 4/5 & 402.3 & 9.1
  & 5/5 & 410.5 & 0.0 & 4.9
  & 4/5 & 456.4 & 70.6 & 29.2
  & 5/5 & 304.2 & 0.0
  & 10.2 \\
& & 0.8
  & 5/5 & 375.2 & 0.0
  & 5/5 & 367.5 & 0.0 & 4.0
  & 5/5 & 350.3 & 0.0 & 15.6
  & 5/5 & 288.3 & 0.0
  & 5.8 \\
\bottomrule
\multicolumn{3}{c|}{\textbf{Total Opt.}}
& \multicolumn{3}{c|}{\textbf{120/135}}
& \multicolumn{4}{c|}{\textbf{129/135}}
& \multicolumn{4}{c|}{\textbf{122/135}}
& \multicolumn{3}{c|}{\textbf{134/135}}
 \\

\multicolumn{3}{c|}{\textbf{Avg. Gap}}
& \multicolumn{3}{c|}{\textbf{10.8}}
& \multicolumn{4}{c|}{\textbf{4.9}}
& \multicolumn{4}{c|}{\textbf{10.8}}
& \multicolumn{3}{c|}{\textbf{1.2}}
\\

\multicolumn{3}{c|}{\textbf{$\Delta$Gap vs BBMC}}
& \multicolumn{3}{c|}{\textbf{--}}
& \multicolumn{4}{c|}{\textbf{54.6\%}}
& \multicolumn{4}{c|}{\textbf{0.0\%}}
& \multicolumn{3}{c|}{\textbf{88.9\%}}
 \\

\bottomrule
\end{tabular}
}
\end{table}
%

We observe that our methods can solve chordal graph instances which are much larger than Erd\H{o}s--R\'enyi graphs within the same time limit:  up to $N=3500$ for $k=2$ and $N=200$ for $k=4$. This shows that, although the problem remains NP-complete for chordal graphs, they are easier from an optimization perspective. The results on chordal instances show that the difficulty varies considerably depending on $k$, $N$, and the density $p$. For smaller graph orders, all variants perform similarly and the differences are mainly in computational time. However, as the graph order and $k$ increase, the variants diverge significantly. For instance, at $k=2$, $N=3500$, $p=0.8$, BBMC, BBMC+IC, and BBMC+H all fail to solve any instance to optimality, whereas BBMC++ solves four out of five. Similarly, at $k=4$, $N=200$, $p=0.2$, BBMC and BBMC+H leave gaps of 94.0\% and 68.5\% respectively, while BBMC++ solves all instances to optimality. Among all variants, BBMC++ achieves the best performance, solving 134 out of 135 instances with an average gap of 1.2\%, compared to 10.8\% for BBMC and BBMC+H, and 4.9\% for BBMC+IC. Figure~\ref{Chordal1} presents a performance comparison of all variants on chordal graph instances, highlighting the number of optimally solved instances and the average optimality gap across all 135 instances. The graphic shows a sharp contrast between IP and the classical Benders decomposition not being able to find even a feasible solution, and the best performing approach BBMC++ solving all but one instances to optimality within the given time limit.


 \begin{figure}[ht]
   \centering
     \includegraphics[width=0.9\textwidth]{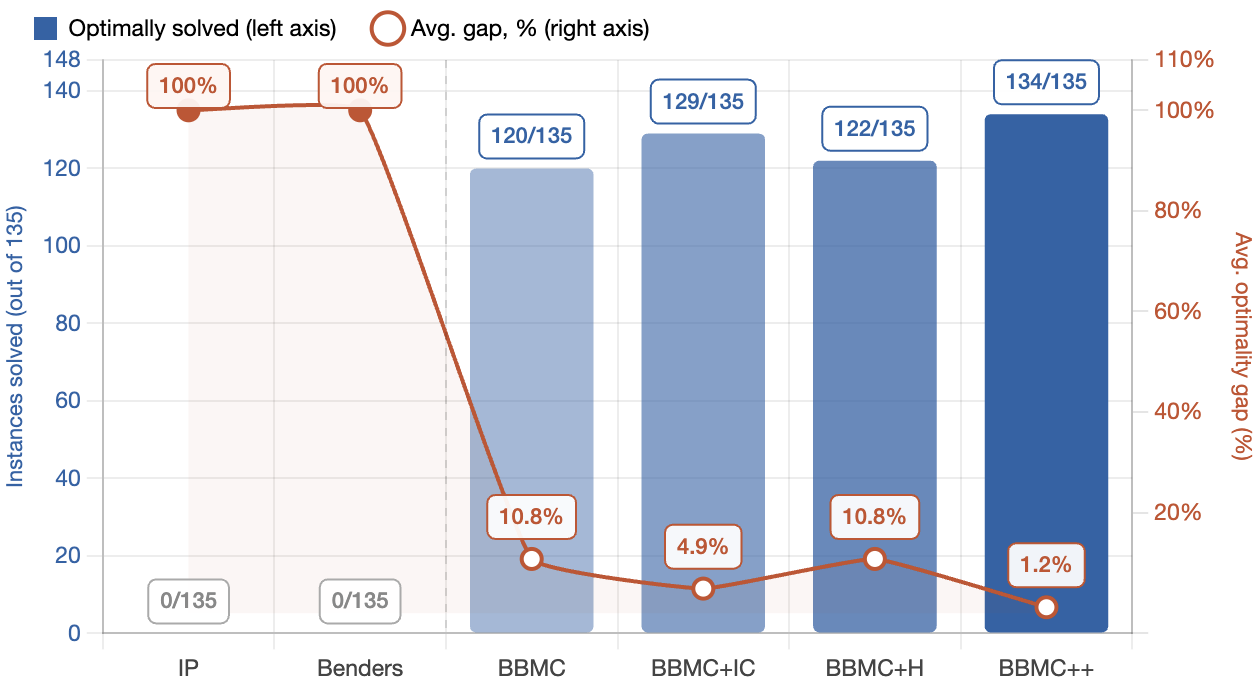}
     \caption{Performance comparison of IP, Benders and all BBMC variants on chordal graphs: optimally solved instances (bars) and average optimality gap over all instances (line).}\label{Chordal1}
 \end{figure}


The computational results on Barab\'asi--Albert graph instances are reported in Table~\ref{tab:ba_selected}. In this setting, BBMC+H denotes the variant equipped with a heuristic warm start obtained from DSATUR, and BBMC++ denotes the combined version using both MIS-based initial cuts and the DSATUR-based heuristic.
%
\begin{table}[H]
\caption{Comparison of algorithm variants on Barab\'asi--Albert graphs.}
\label{tab:ba_selected}
\centering
\resizebox{\textwidth}{!}{
\begin{tabular}{c@{\hspace{10pt}}c c|
                ccc|
                cccc|
                cccc|
                ccc|
                c}
\toprule
& &
& \multicolumn{3}{c|}{BBMC}
& \multicolumn{4}{c|}{BBMC+IC}
& \multicolumn{4}{c|}{BBMC+H}
& \multicolumn{3}{c|}{BBMC++}
& Obj. \\
$k$ & $N$ & $p$
& Opt. & Time & Gap
& Opt. & Time & Gap & $LB_0$
& Opt. & Time & Gap & $UB_0$
& Opt. & Time & Gap
& value \\
\bottomrule
\multirow{8}{*}{\centering $2$}
& \multirow{2}{*}{250}
& 0.2
  & 1/5 & 135.2 & 11.4
  & 2/5 & 141.2 & 21.6 & 3.7
  & 1/5 & 177.1 & 13.6 & 10.8
  & 2/5 & 277.7 & 14.3
  & 9.8 \\
& & 0.5
  & 5/5 & 2.2 & 0.0
  & 5/5 & 1.5 & 0.0 & 2.0
  & 5/5 & 1.5 & 0.0 & 3.0
  & 5/5 & 1.5 & 0.0
  & 3.0 \\
\cmidrule{2-18}
& \multirow{2}{*}{500}
& 0.2
  & 0/5 & -- & 56.9
  & 0/5 & -- & 62.7 & 3.6
  & 0/5 & -- & 81.6 & 32.8
  & 0/5 & -- & 60.1
  & 7.0 \\
& & 0.5
  & 5/5 & 22.0 & 0.0
  & 5/5 & 23.8 & 0.0 & 2.0
  & 5/5 & 18.1 & 0.0 & 14.6
  & 5/5 & 22.7 & 0.0
  & 3.8 \\
\cmidrule{2-18}
& \multirow{2}{*}{1000}
& 0.2
  & 0/5 & -- & 99.5
  & 0/5 & -- & 75.6 & 3.39
  & 0/5 & -- & 89.5 & 38.0
  & 0/5 & -- & 84.1
  & 21.0 \\
& & 0.5
  & 5/5 & 268.3 & 0.0
  & 5/5 & 329.5 & 0.0 & 2.0
  & 5/5 & 214.9 & 0.0 & 19.4
  & 4/5 & 308.7 & 25.0
  & 4.4 \\
\bottomrule
\multirow{6}{*}{\centering $3$}
& \multirow{2}{*}{100}
& 0.2
  & 5/5 & 16.4 & 0.0
  & 5/5 & 14.8 & 0.0 & 4.2
  & 5/5 & 10.9 & 0.0 & 20.8
  & 5/5 & 7.4 & 0.0
  & 9.8 \\
& & 0.5
  & 5/5 & 3.7 & 0.0
  & 5/5 & 1.3 & 0.0 & 3.0
  & 5/5 & 2.1 & 0.0 & 8.0
  & 5/5 & 1.1 & 0.0
  & 4.0 \\
\cmidrule{2-18}
& \multirow{2}{*}{200}
& 0.2
  & 0/5 & -- & 32.7
  & 0/5 & -- & 73.5 & 3.9
  & 0/5 & -- & 23.9 & 24.6
  & 1/5 & 573.5 & 16.3
  & 10.2 \\
& & 0.5
  & 5/5 & 28.9 & 0.0
  & 5/5 & 34.1 & 0.0 & 3.0
  & 5/5 & 40.6 & 0.0 & 10.8
  & 5/5 & 28.6 & 0.0
  & 4.0 \\
\cmidrule{2-18}
& \multirow{2}{*}{300}
& 0.2
  & 0/5 & -- & 92.3
  & 0/5 & -- & 95.8 & 3.6
  & 0/5 & -- & 68.2 & 29.4
  & 0/5 & -- & 47.8
  & 7.6 \\
& & 0.5
  & 5/5 & 337.6 & 0.0
  & 5/5 & 309.6 & 0.0 & 3.0
  & 4/5 & 327.4 & 20.0 & 13.0
  & 5/5 & 304.6 & 0.0
  & 4.0 \\
\bottomrule
\multirow{6}{*}{\centering $4$}
& \multirow{2}{*}{100}
& 0.2
  & 5/5 & 225.9 & 0.0
  & 5/5 & 125.6 & 0.0 & 4.4
  & 5/5 & 203.9 & 0.0 & 21.4
  & 5/5 & 186.6 & 0.0
  & 11.2 \\
& & 0.5
  & 5/5 & 36.5 & 0.0
  & 5/5 & 20.8 & 0.0 & 4.0
  & 5/5 & 34.7 & 0.0 & 8.2
  & 5/5 & 23.8 & 0.0
  & 5.0 \\
\cmidrule{2-18}
& \multirow{2}{*}{125}
& 0.2
  & 0/5 & -- & 45.4
  & 1/5 & 514.8 & 24.0 & 4.6
  & 0/5 & -- & 28.5 & 21.6
  & 1/5 & 466.3 & 28.6
  & 10.2 \\
& & 0.5
  & 5/5 & 89.7 & 0.0
  & 5/5 & 98.6 & 0.0 & 4.0
  & 5/5 & 73.1 & 0.0 & 7.8
  & 5/5 & 72.3 & 0.0
  & 5.0 \\
\cmidrule{2-18}
& \multirow{2}{*}{150}
& 0.2
  & 0/5 & -- & 75.7
  & 0/5 & -- & 53.5 & 4.2
  & 0/5 & -- & 61.6 & 22.4
  & 0/5 & -- & 64.0
  & 8.4 \\
& & 0.5
  & 5/5 & 381.1 & 0.0
  & 4/5 & 371.6 & 16.7 & 4.0
  & 5/5 & 296.4 & 0.0 & 10.4
  & 5/5 & 307.1 & 0.0
  & 5.0 \\
\bottomrule
\multicolumn{3}{c|}{\textbf{Total Opt.}}
& \multicolumn{3}{c|}{\textbf{56/90}}
& \multicolumn{4}{c|}{\textbf{57/90}}
& \multicolumn{4}{c|}{\textbf{55/90}}
& \multicolumn{3}{c|}{\textbf{58/90}}
 \\

\multicolumn{3}{c|}{\textbf{Avg. Gap}}
& \multicolumn{3}{c|}{\textbf{23.0}}
& \multicolumn{4}{c|}{\textbf{23.5}}
& \multicolumn{4}{c|}{\textbf{21.5}}
& \multicolumn{3}{c|}{\textbf{18.9}}
 \\

\multicolumn{3}{c|}{\textbf{$\Delta$Gap vs BBMC}}
& \multicolumn{3}{c|}{\textbf{--}}
& \multicolumn{4}{c|}{\textbf{-2.2\%}}
& \multicolumn{4}{c|}{\textbf{6.5\%}}
& \multicolumn{3}{c|}{\textbf{17.8\%}}
\\
\bottomrule
\end{tabular}
}
\end{table}
%


In Table \ref{tab:ba_selected}, we observe once again that we can solve Barab\'asi--Albert instances which are much larger than Erd\H{o}s--R\'enyi graphs: up to $N=1000$ for $k=2$, and $N=150$ for $k=4$. The results on Barab\'asi--Albert instances indicate that the problem is particularly challenging for sparse graphs with $p=0.2$ and larger values of $N$, where none of the variants achieves full coverage. For denser instances with $p=0.5$, all variants generally perform well across all $k$ values. Among the compared variants, BBMC++ consistently achieves the best overall behavior, solving 58 out of 90 instances with an average gap of 18.9\%, compared to 23.0\% for BBMC. For instance, at $k=3$, $N=300$, $p=0.2$, BBMC++ attains the smallest final gap of 47.8\%, while BBMC leaves a gap of 92.3\%. However, the relative contribution of each enhancement varies by instance class, and the improvements are less pronounced compared to the other graph families. Figure~\ref{Barabasi1} presents a performance comparison of all variants on Barab\'asi--Albert instances, highlighting the number of optimally solved instances and the average optimality gap across all 90 instances. The performance improvement from IP to BBMC++ is much more significant than for Erd\H{o}s--R\'enyi graphs, similar to the improvement trend obtained for chordal graphs.

As for a comparison of the results of the clique-cover-based heuristic with optimal values, one can compare the columns $UB_0$ under BBMC+H in Tables \ref{tab:er_selected}, \ref{tab:chordal_selected}, and \ref{tab:ba_selected} for Erdős-Rényi, chordal and Barab\'asi--Albert graphs, respectively. In all cases, the values remain within a range of 2.32 times optimal values (sometimes being much closer, and sometimes even hitting the best values), independently from the instance sizes. 


 \begin{figure}[ht]
   \centering
     \includegraphics[width=0.9\textwidth]{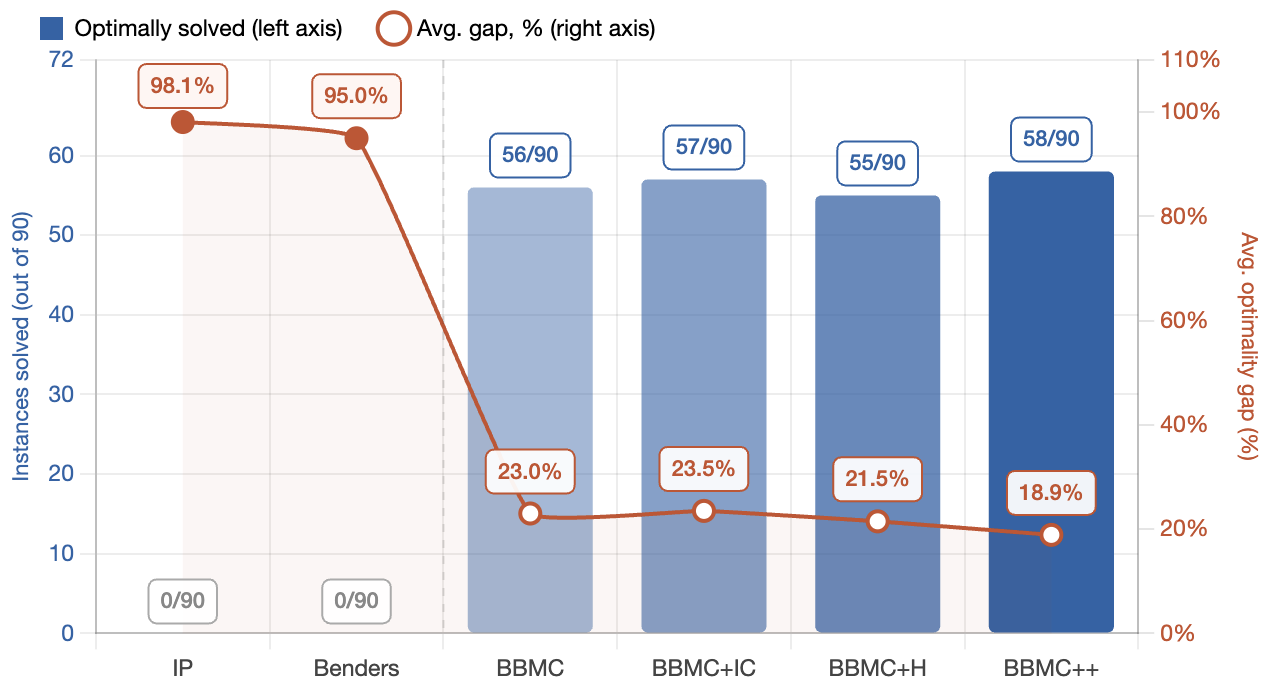}
     \caption{Performance comparison of IP, Benders and all BBMC variants on Barab\'asi-Albert graphs: optimally solved instances (bars) and average optimality gap over all instances (line).}\label{Barabasi1}
 \end{figure}










\section{Conclusion and future work}
\label{section:conclusion}

This study investigates the application of Benders decomposition to the $k$-DefDom problem, a computationally challenging combinatorial optimization problem with practical relevance in network security and emergency response management. The proposed methods address two fundamental limitations of classical approaches: poor scalability and the absence of feasible solution methods in the literature.

The Benders decomposition framework, enhanced with strategies such as a combinatorial cut separation procedure, a cut-strength-based sampling method for adding multiple cuts, and mechanisms for generating initial lower and upper bounds, achieved markedly better performance than both the IP formulation and the classical Benders decomposition method. In particular, it delivered substantial reductions in optimality gaps (91\% compared to the integer programming formulation and 86\% relative to the classical Benders decomposition approach).
The clique-cover-based heuristic is the first heuristic approach in the literature designed to construct a defensive dominating set for general graphs. It improves the naive upper bound of $N$ by as much as 98\%. The initial cut-generation heuristic enhances the lower bounds and, in some instances, solves the problem entirely without requiring additional branching. Overall, the combined framework consistently outperformed all alternative variants across each of the investigated graph families, namely Erd\H{o}s--R\'enyi, chordal, and Barab\'asi--Albert. That being said, we note that the network structure plays an important role in the performance of the developed approaches: while Erd\H{o}s--R\'enyi graphs appear as the hardest instances, graphs which commonly model several network problems are easier to solve from an optimization point of view. Besides, although still as difficult as the general instances from theoretical aspect, the structural properties of chordal graphs make them easier instances.
These findings indicate that the proposed framework constitutes a promising approach for addressing large-scale defensive domination problems. The test instances are openly available to support reproducibility and further research at \cite{github} \url{(https://github.com/bilgeevrl/k-DefDom-Graph-Instances)}.

The main limitation of the Benders decomposition method presented in this work is that the feasibility of a master solution can only be verified after exhaustively examining all possible attacks to determine whether they can be successfully countered (i.e., whether the subproblem is feasible). One potential research direction is to reformulate the problem so that it does not require enumerating every possible attack, for instance by adopting a bilevel defender–attacker formulation. 
Future research could also consider extensions to weighted or directed graphs, as well as the design of exact or heuristic solution methods tailored to particular classes of graph structures.

\section*{Acknowledgments}
Bilge Varol has been supported by T\"UB\.ITAK B\.IDEB 2211. This work has been supported by the European Commission's Horizon Europe Research and Innovation programme through the Marie Skłodowska-Curie Actions Staff Exchanges (MSCA-SE) under Grant Agreement no.101182819 (COVER: (C)ombinatorial (O)ptimization for (V)ersatile Applications to (E)merging u(R)ban Problems).


	\begin{figure}[h!]
	   \centering
	   \includegraphics[width=0.3\linewidth]{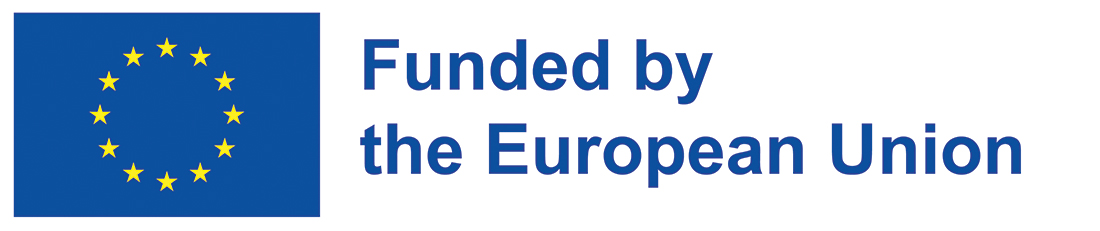}
    \end{figure}


\bibliographystyle{elsarticle-harv} 

\bibliography{cas-refs}

@book{haynes2013fundamentals,
  title={Fundamentals of domination in graphs},
  author={Haynes, Teresa W and Hedetniemi, Stephen and Slater, Peter},
  year={2013},
  publisher={CRC press}
}

@book{henning2014total,
  title={Total Domination in Graphs},
  author={Henning, M.A. and Yeo, A.},
  isbn={9781461465256},
  series={Springer Monographs in Mathematics},
  url={https://books.google.com.tr/books?id=BYe4BAAAQBAJ},
  year={2014},
  publisher={Springer New York}
}

@book{haynes2020topics,
  title={Topics in domination in graphs},
  author={Haynes, Teresa W and Hedetniemi, Stephen T and Henning, Michael A},
  volume={64},
  year={2020},
  publisher={Springer}
}

@book{haynes2023domination,
  title={Domination in graphs: Core concepts},
  author={Haynes, Teresa W and Hedetniemi, Stephen T and Henning, Michael A},
  year={2023},
  publisher={Springer}
}

@article{taskin2010benders,
  title={Benders decomposition},
  author={Task{\i}n, Z Caner},
  journal={Wiley Encyclopedia of Operations Research and Management Science. John Wiley \& Sons, Malden (MA)},
  year={2010}
}

@article{farley2004defensive,
  title={Defensive domination},
  author={Farley, Arthur M and Proskurowski, Andrzej},
  journal={Congressus Numerantium},
  volume={168},
  pages={97},
  year={2004},
  publisher={Winnipeg; Utilitas Mathematica; 1998}
}

@article{ekim2020complexity,
  title={The complexity of the defensive domination problem in special graph classes},
  author={Ekim, T{\i}naz and Farley, Arthur M and Proskurowski, Andrzej},
  journal={Discrete Mathematics},
  volume={343},
  number={2},
  pages={111665},
  year={2020},
  publisher={Elsevier}
}

@article{ekim2023defensive,
  title={Defensive domination in proper interval graphs},
  author={Ekim, T{\i}naz and Farley, Arthur and Proskurowski, Andrzej and Shalom, Mordechai},
  journal={Discrete Applied Mathematics},
  volume={331},
  pages={59--69},
  year={2023},
  publisher={Elsevier}
}

@article{gendron2014benders,
  title={Benders decomposition, branch-and-cut, and hybrid algorithms for the minimum connected dominating set problem},
  author={Gendron, Bernard and Lucena, Abilio and da Cunha, Alexandre Salles and Simonetti, Luidi},
  journal={INFORMS Journal on Computing},
  volume={26},
  number={4},
  pages={645--657},
  year={2014},
  publisher={INFORMS}
}

@article{harary1995conditional,
  title={Conditional graph-theory. 4. dominating sets},
  author={Harary, F and Haynes, TW},
  journal={Utilitas Mathematica},
  volume={48},
  pages={179--192},
  year={1995},
  publisher={UTIL MATH PUBL INC UNIV MANITOBA PO BOX 7 UNIV CENT, WINNIPEG, MANITOBA R3T~…}
}

@article{henning2025more,
  title={More on the complexity of defensive domination in graphs},
  author={Henning, Michael A and Pandey, Arti and Tripathi, Vikash},
  journal={Discrete Applied Mathematics},
  volume={362},
  pages={167--179},
  year={2025},
  publisher={Elsevier}
}

@inproceedings{henning2021approximation,
  title={Approximation Algorithm and Hardness Results for Defensive Domination in Graphs},
  author={Henning, Michael A and Pandey, Arti and Tripathi, Vikash},
  booktitle={International Conference on Combinatorial Optimization and Applications},
  pages={247--261},
  year={2021},
  organization={Springer}
}

@article{benders1962partitioning,
  title={Partitioning procedures for solving mixed-variables programming problems},
  author={Benders, J.F.},
  journal={Numerische Mathematik},
  volume={4},
  number={1},
  pages={238--252},
  year={1962}
}

@article{erdos1960evolution,
  title={On the evolution of random graphs},
  author={Erd\H{o}s, Paul and R{\'e}nyi, Alfr{\'e}d},
  journal={Publ. Math. Inst. Hungar. Acad. Sci},
  volume={5},
  pages={17--61},
  year={1960},
  publisher={Citeseer}
}

@article{DSATUR,
author = {Br\'{e}laz, Daniel},
title = {New methods to color the vertices of a graph},
year = {1979},
issue_date = {April 1979},
publisher = {Association for Computing Machinery},
address = {New York, NY, USA},
volume = {22},
number = {4},
issn = {0001-0782},
doi = {10.1145/359094.359101},
journal = {Commun. ACM},
month = apr,
pages = {251–256},
numpages = {6}
}

@article{chordalCC,
author = {Gavril, F\u{a}nic\u{a}},
title = {Algorithms for Minimum Coloring, Maximum Clique, Minimum Covering by Cliques, and Maximum Independent Set of a Chordal Graph},
journal = {SIAM Journal on Computing},
volume = {1},
number = {2},
pages = {180-187},
year = {1972},
doi = {10.1137/0201013}
}

@book{GJ,
author = {Garey, Michael R. and Johnson, David S.},
title = {Computers and Intractability; A Guide to the Theory of NP-Completeness},
year = {1990},
isbn = {0716710455},
publisher = {W. H. Freeman \& Co.},
address = {USA}
}

@article{dereniowski2019cops,
  title={Cops, a fast robber and defensive domination on interval graphs},
  author={Dereniowski, Dariusz and Gaven{\v{c}}iak, Tom{\'a}{\v{s}} and Kratochv{\'\i}l, Jan},
  journal={Theoretical Computer Science},
  volume={794},
  pages={47--58},
  year={2019},
  publisher={Elsevier}
}

@InProceedings{chaplick2025note,
  author =	{Chaplick, Steven and Gutowski, Grzegorz and Krawczyk, Tomasz},
  title =	{{A Note on the Complexity of Defensive Domination}},
  booktitle =	{50th International Symposium on Mathematical Foundations of Computer Science (MFCS 2025)},
  pages =	{35:1--35:15},
  series =	{Leibniz International Proceedings in Informatics (LIPIcs)},
  ISBN =	{978-3-95977-388-1},
  ISSN =	{1868-8969},
  year =	{2025},
  volume =	{345},
  editor =	{Gawrychowski, Pawe{\l} and Mazowiecki, Filip and Skrzypczak, Micha{\l}},
  publisher =	{Schloss Dagstuhl -- Leibniz-Zentrum f{\"u}r Informatik},
  address =	{Dagstuhl, Germany},
  doi =		{10.4230/LIPIcs.MFCS.2025.35}
}

@article{barabasi1999emergence,
  title={Emergence of scaling in random networks},
  author={Barab{\'a}si, Albert-L{\'a}szl{\'o} and Albert, R{\'e}ka},
  journal={science},
  volume={286},
  number={5439},
  pages={509--512},
  year={1999},
  publisher={American Association for the Advancement of Science}
}

@article{cseker2022generation,
  title={Generation of random chordal graphs using subtrees of a tree},
  author={{\c{S}}eker, Oylum and Heggernes, Pinar and Ekim, Tinaz and Ta{\c{s}}k{\i}n, Z Caner},
  journal={RAIRO-Operations Research},
  volume={56},
  number={2},
  pages={565--582},
  year={2022},
  publisher={EDP Sciences}
}

@misc{cseker_chordal_github,
  author       = {Oylum {\c{S}}eker},
  title        = {Chordal: GitHub repository},
  year={2020},
  note         = {Available at: https://github.com/oylumseker/Chordal/tree/master, accessed 2026-03-17}
}

@misc{github,
  author       = {Bilge Varol},
  title        = {GitHub repository of instances used in $k$-defensive domination problem},
  year={2026},
  note         = {Available at: https://github.com/bilgeevrl/k-DefDom-Graph-Instances}
}



\appendix
\section{The results of preliminary tests for parameter selection}
\label{section:Parameter}
\begin{table}[H]
\centering
\footnotesize
\caption{Parameter Optimization Results for Erd\H{o}s--R\'enyi Random Graphs using BBMC ($k=2, N=100$)}
\label{appendix}
\resizebox{\textwidth}{!}{
\begin{tabular}{cc|ccc|ccc|ccc|c}
\toprule
\multicolumn{2}{c|}{\textbf{Configuration}} &
\multicolumn{3}{c|}{\textbf{$p = 0.20$}} &
\multicolumn{3}{c|}{\textbf{$p = 0.50$}} &
\multicolumn{3}{c|}{\textbf{$p = 0.80$}} &
\textbf{Average} \\
$C_{\max}$ & $B$ &
Time & Cuts & Gap &
Time & Cuts & Gap &
Time & Cuts & Gap &
Time \\
\midrule

\multicolumn{11}{l}{\textit{Effect of $C_{\max}$ (with $B=50{,}000$ fixed)}} \\
\midrule
1   & 50,000 & 173.27 & 1448 & 0\% & 12.64 & 1203 & 0\% & 0.66 & 223 & 0\% & 62.19 \\
50  & 50,000 & \textbf{120.80} & 1786 & 0\% & \textbf{11.03} & 1774 & 0\% & \textbf{0.23} & 604 & 0\% & \textbf{44.16} \\
100 & 50,000 & 153.25 & 1996 & 0\% & 11.13 & 1347 & 0\% & 0.50 & 1086 & 0\% & 52.44 \\

\midrule
\multicolumn{11}{l}{\textit{Effect of $B$ (with $C_{\max}=50$ fixed)}} \\
\midrule
50 & 1,000   & 150.48 & 1941 & 0\% & \textbf{7.71} & 1426 & 0\% & 0.26 & 768 & 0\% & 52.82 \\
50 & 50,000  & \textbf{118.80} & 1786 & 0\% & 11.03 & 1774 & 0\% & \textbf{0.21} & 604 & 0\% & \textbf{40.16} \\
50 & 100,000 & 121.39 & 1786 & 0\% & 11.46 & 1774 & 0\% & 0.23 & 604 & 0\% & 44.25 \\

\midrule
\multicolumn{11}{l}{\textbf{Selected configuration:} $C_{\max}=50$, $B=50{,}000$} \\
\bottomrule
\end{tabular}}
\end{table}

\section{The results of the clique-cover-based heuristic}
\label{section:Appendix_heuristic}

\begin{table}[H]
\centering
\caption{Comparison of initial upper bounds produced by different clique-cover-based heuristic strategies across representative graph-order groups}
\label{tab:warmstart_result}
\resizebox{\textwidth}{!}{
\begin{threeparttable}
\begin{tabular}{ccc|cc|cccc|cc}
\toprule
\multirow{2}{*}{$p$} & \multirow{2}{*}{Size} & \multirow{2}{*}{$k$}
& \multicolumn{2}{c|}{ER}
& \multicolumn{4}{c|}{Chordal}
& \multicolumn{2}{c}{BA} \\
\cmidrule(lr){4-5}\cmidrule(lr){6-9}\cmidrule(lr){10-11}
& & 
& DSATUR & DSATUR+M
& DSATUR & DSATUR+M & PEO & PEO+M
& DSATUR & DSATUR+M \\
\midrule

\multirow{15}{*}{0.2}
& \multirow{5}{*}{S} & 2  &  65 &  17 & 111 & 61 &  62 & 58 & 124 & 24 \\
&                    & 5  & 100 &  19 & 247 & 94 &  141 & 84 & 199 & 23 \\
&                    & 7  & 100 &  19 & 317 & 103 & 192 & 91 & 200 & 24 \\
&                    & 10 & 100 &  19 & 405 & 109 & 257 & 104 & 200 & 24 \\
\cmidrule(lr){2-11}
& \multirow{5}{*}{M} & 2  & 113 & 18 & 141 & 79 &  72 &  68 & 294 & 37 \\
&                    & 5  & 200 & 19 & 330 & 107 & 155 & 107 & 592 & 31 \\
&                    & 7  & 200 & 19 & 433 & 128 & 202 & 121 & 600 & 32 \\
&                    & 10 & 200 & 19 & 550 & 146 & 275 & 137 & 600 & 32 \\
\cmidrule(lr){2-11}
& \multirow{5}{*}{L} & 2  & 164 & 23 & 166 & 81 &  85 &  76 & 461 & 39 \\
&                    & 5  & 300 & 20 & 377 & 133 & 168 & 130 & 982 & 40 \\
&                    & 7  & 300 & 20 & 486 & 138 & 224 & 138 & 999 & 40 \\
&                    & 10 & 300 & 20 & 633 & 151 & 313 & 143 & 1000 & 39 \\

\midrule

\multirow{15}{*}{0.5}
& \multirow{5}{*}{S} & 2  &  37 &  8 &  65 &  30 &  31 &  27 & 113 & 9 \\
&                    & 5  &  86 &  10 & 151 & 51 &  67 &  40 & 158 & 10 \\
&                    & 7  & 100 &  11 & 195 & 59 &  91 & 46  & 174 & 14 \\
&                    & 10 & 100 &  11 & 256 & 50 &  118 & 50 & 186 & 16 \\
\cmidrule(lr){2-11}
& \multirow{5}{*}{M} & 2  &  64 &  11 &  85 &  28 &  31 &  28 & 323 & 16 \\
&                    & 5  & 154 &  12 & 196 & 62 &  65 &  47 & 406 & 19 \\
&                    & 7  & 197 & 13 & 261 & 74 &  92 &  57 & 455 & 20 \\
&                    & 10 & 200 & 13 & 344 & 94 &  135 &  63 & 511 & 20 \\
\cmidrule(lr){2-11}
& \multirow{5}{*}{L} & 2  &  86 &  11 & 105 &  44 &  39 &  38 & 525 & 19 \\
&                    & 5  & 212 & 12 & 243 & 74 &  84 &  64 & 643 & 23 \\
&                    & 7  & 281 & 13 & 320 & 95 &  119 &  71 & 717 & 25 \\
&                    & 10 & 300 & 13 & 417 & 102 & 162 & 76 & 813 & 26 \\

\midrule

\multirow{15}{*}{0.8}
& \multirow{5}{*}{S} & 2  &  18 &  11 &  45 &  22 &  11 &  11 & -- & -- \\
&                    & 5  &  43 &  14 & 107 &  34 &  27 &  22 & -- & -- \\
&                    & 7  &  59 &  14 & 143 &  37 &  37 &  33 & -- & -- \\
&                    & 10 &  80 &  14 & 192 & 45 &  50 &  48 & -- & -- \\
\cmidrule(lr){2-11}
& \multirow{5}{*}{M} & 2  &  26 &  15 &  60 &  24 &  14 &  13 & -- & -- \\
&                    & 5  &  65 &  17 & 137 &  43 &  40 &  35 & -- & -- \\
&                    & 7  &  91 &  18 & 184 & 49 &  55 &  46 & -- & -- \\
&                    & 10 & 130 &  19 & 249 & 73 &  79 &  73 & -- & -- \\
\cmidrule(lr){2-11}
& \multirow{5}{*}{L} & 2  &  38 &  17 &  72 &  26 &  19 &  18 & -- & -- \\
&                    & 5  &  92 &  20 & 157 &  41 &  41 &  38 & -- & -- \\
&                    & 7  & 128 &  19 & 207 & 56 &  55 &  54 & -- & -- \\
&                    & 10 & 182 &  20 & 277 & 74 &  77 &  63 & -- & -- \\

\bottomrule
\end{tabular}

\begin{tablenotes}[flushleft]
\footnotesize
\item Size groups are defined separately for each graph family. Erd\H{o}s--R\'enyi (ER) graphs use $(100,200,300)$ for $(S,M,L)$, chordal graphs use $(500,750,1000)$, and Barab\'asi--Albert (BA) graphs use $(200,600,1000)$. Initial upper-bound values ($UB_0$) are reported for each case, and the suffix +M denotes the complete version of the clique-cover-based heuristic, where a matching-based reduction is employed.
\end{tablenotes}
\end{threeparttable}
}
\end{table}

\end{document}